\theoremstyle{plain}
\newtheorem{thm}{Theorem}[section]
\newtheorem{lem}[thm]{Lemma}
\newtheorem{cor}[thm]{Corollary}
\theoremstyle{definition}
\newtheorem{defi}[thm]{Definition}
\newtheorem{rem}[thm]{Remark}
\newcommand{\Q}{\mathbb Q}
\newcommand{\R}{\mathbb R}
\newcommand{\Z}{\mathbb Z}
\newcommand{\F}{\mathbb F}
\newcommand{\nn}{\vskip 0.2cm}
\newcommand{\n}{\vskip 0.1cm}
\begin{document}

\title [\ ] {On free $\Z_p$-torus actions in dimension two and three}

\author{Li Yu}
\address{Department of Mathematics and IMS, Nanjing University, Nanjing, 210093, P.R.China
 }
 \email{yuli@nju.edu.cn}
 \thanks{2010 \textit{Mathematics Subject Classification}. 57S17, 57M10, 19J35, 20C05. \\
 The author is partially supported by
 Natural Science Foundation of China (grant no.11371188) and the
 PAPD (priority academic program development) of Jiangsu higher education institutions.}

%\date{September 2, 2012}

\keywords{Group homology, Halperin-Carlsson conjecture,
 $\Z_p$-torus, free group action,  $3$-manifold}

%\subjclass[2000]{ }

\begin{abstract}
 We confirm the Halperin-Carlsson Conjecture for free $\Z_p$-torus
actions ($p$ is a prime) on $2$-dimensional finite CW-complexes
and free $\Z_2$-torus actions on compact $3$-manifolds.
 \end{abstract}

\maketitle

 \section{Introduction}

   Let $S^1=\{ z\in \mathbb{C} \, |\, |z|=1 \}$ be the circle group,
   and let $\Z_p$ be the abelian group $\Z\slash p\Z$ where $p$ is a prime.
   To avoid ambiguity, we use $\F_p$ to denote $\Z\slash p\Z$ as a field of coefficients.
  For any topological space $X$, let $H_i(X;\mathbb{F})$ and $H^i(X;\mathbb{F})$ denote the
  singular homology and singular cohomology groups of $X$ with coefficients $\mathbb{F}$, respectively. Define
    $$ b_i(X;\mathbb{F}) = \dim_{\mathbb{F}} H_i(X;\mathbb{F}) = \dim_{\mathbb{F}} H^i(X;\mathbb{F}),\
      \forall\,i\geq 0, \ \text{where}\ \F =\F_p \ \text{or}\ \mathbb{Q}. $$
   For any group $G$, let $H_i(G;\F_p)$ be the group homology of $G$ with $\F_p$-coefficient and let $b_i(G;\F_p) = \dim_{\F_p} H_i(G;\F_p)$.\nn
   \vskip .2cm

    \textbf{Halperin-Carlsson Conjecture (Toral Rank Conjecture):}\n
     If a finite dimensional space $X$ admits a free $(\Z_p)^r$-action ($p$ is
    a prime) or an almost free $(S^1)^r$-action, then $\sum^{\infty}_{i=0}
    b_i(X; \F_p)\geq 2^r$ or $\sum^{\infty}_{i=0} b_i(X; \Q)\geq 2^r$ respectively.\vskip .2cm

     The group $(\Z_p)^r$ is called a $\Z_p$-torus of rank $r$. For convenience, we define
     \[
          \mathrm{hrk}(X; \F) := \sum^{\infty}_{i=0} b_i(X; \F),\ \text{where}\ \F
           =\F_p \ \text{or}\ \mathbb{Q}.
        \]

       The above conjecture was proposed in the middle of 1980s by S.~Halperin
       in~\cite{Halperin85} for the torus case, and by G.~Carlsson
       in~\cite{Cal85} for the $\Z_p$-torus case.  \n

   In earlier time, this conjecture mainly took the form of
   whether the existence of a free $(\Z_p)^r$-action on a product of spheres
   $S^{n}\times \cdots \times S^{n_k}$ implies $r\leq k$.
   Many authors have studied this intriguing conjecture
   and contributed results with respect to different aspects
   (see~\cite{Adem87, AdemBrowd88, Adem98, Carl82, Corner57, Hanke09}).
   The reader is referred to~\cite{Adem04} and ~\cite{Allday93} for a survey of this subject.
    But the general case is still wide open.\n

   For general finite dimensional spaces, Halperin-Carlsson Conjecture was proved in~\cite{Puppe09} for
    $r\leq 3$ in the torus and $\Z_2$-torus
    cases and $r\leq 2$ in the $\Z_p$-torus ($p$ is odd prime) case.
     Some other evidences supporting this conjecture can be found in Cao-L\"u~\cite{LuZhi09},
     Ustinovskii~\cite{Uto09},
    Choi-Masuda-Oum~\cite{ChoiMasOum10}, F\'elix-Oprea-Tanr\'e~\cite{FOT08}, 
      Kamishima-Nakayama~\cite{KamNaka12}
     and Yu~\cite{Yu2012} in various settings.
       \n

  In this paper, we will mainly study the Halperin-Carlsson Conjecture for free
  cellular $(\Z_p)^r$-actions ($r\geq 1$) on a finite
  CW-complex $X$. In other words, $X$ can be thought of as
    a regular covering space over a finite
  CW-complex whose deck transformation group is $(\Z_p)^r$.
   In addition, the following conventions are adopted throughout this
 paper.\n
    \begin{itemize}
    \item  If not particularly indicated, any CW-complexes (or manifolds) and 
    their covering spaces
     in this paper are assumed to be path-connected.\n

    \item The rank $r$ of the group $(\Z_p)^r$ is at least $1$, i.e. $(\Z_p)^r$ is never trivial.\n

     \item For any set $S$, we use $|S|$ to denote the number of elements in
    $S$.
    \end{itemize}
    \n

  The main results of this paper are the following theorems.\n

  \begin{thm} \label{thm:Main1}
   If $(\Z_p)^r$ can act freely and cellularly on a 
    $2$-dimensional finite CW-complex $X$,
     then $\mathrm{hrk}(X; \F_p) \geq 2^r$.
      In particular, if $\mathrm{hrk}(X; \F_p) =  2^r$, then $r$ must be $1$ or $2$
         and, $X$ and the orbit space $K=X\slash (\Z_p)^r$ 
         must satisfy one of the following conditions:\n
       \begin{itemize}
         \item[(a)] $r=1$, $H_*(K;\F_p) \cong H_*(X; \F_p) \cong
         H_*(S^1;\F_p)$; \n

         \item[(b)] $r=1$, $p=2$, $H_*(K;\F_2) \cong H_*(\R P^2;\F_2)$, $H_*(X;\F_2) \cong
         H_*(S^2;\F_2)$; \n

         \item[(c)] $r=2$, $H_*(K;\F_p) \cong H_*(X; \F_p) \cong H_*(S^1\times
         S^1;\F_p)$.
       \end{itemize}
   \end{thm}

     \begin{thm}  \label{thm:Main2}
    Suppose $(\Z_p)^r$ acts freely and cellularly on a 
     finite CW-complex $X$. If the deficiency of the fundamental group
       of the orbit space $K=X\slash (\Z_p)^r$ is greater or equal to $1$, then we can conclude:
       \begin{itemize}
        \item $b_1(X;\F_p)\geq 2^{r-1}$ and
       $\mathrm{hrk}(X;\F_p) \geq 2^r$;\n
       
       \item if $\mathrm{hrk}(X;\F_p) = 2^r$, it is necessary that $b_1(K;\F_p) = r = 1$ or $2$.
       \end{itemize}
  \end{thm}

   \begin{thm} \label{thm:Main3}
   Suppose $(\Z_2)^r$ acts freely and cellularly on a 
     finite CW-complex $X$.
     If the deficiency of the fundamental group of the orbit space $K=X\slash (\Z_2)^r$
      is greater or equal to $0$, then we can conclude:
      \begin{itemize}
       \item $\mathrm{hrk}(X;\F_2) \geq 2^r$;\n
       \item if $\mathrm{hrk}(X;\F_2) = 2^r$, it is necesssy that $b_1(K;\F_2) = r \leq 3$.
       \end{itemize}
    \end{thm}

   \begin{rem}
    In the above theorems, the dimension of $X$ must be greater than $0$ since we assume
    that $X$ is path-connected and $r\geq 1$.
   \end{rem}
    It is well known that the fundamental group of any closed connected $3$-manifold 
    has deficiency greater than or equal to $0$. Indeed,  
     any closed connected $3$-manifold $Q$
    has Heegaard splitting (see~\cite[Chapter 5]{CoZie93}) which leads to 
    a finite presentation of the fundamental group of $Q$ with equal number of generators and
    relators. Then
     we have the following corollary of Theorem~\ref{thm:Main3}
      for free $(\Z_2)^r$-actions on closed $3$-manifolds.\n

    \begin{cor}\label{Cor:3-manifolds}
      If a closed connected $3$-manifold $M$ admits a free $(\Z_2)^r$-action,
      then $\mathrm{hrk}(M;\F_2) \geq 2^r$.
      In particular, if $\mathrm{hrk}(M;\F_2) = 2^r$, then $r\leq 3$ and,
      $M$ and the orbit space $Q=M\slash (\Z_2)^r$ must satisfy one of the following conditions:\n
          \begin{itemize}
              \item[(a)] $r=1$, $H_*(Q;\F_2)\cong H_*(\R P^3;\F_2)$, $H_*(M;\F_2)\cong
            H_*(S^3;\F_2)$;\n

             \item[(b)] $r=2$,  $H_*(Q;\F_2)\cong H_*(S^1\times \R P^2;\F_2)$,
             $H_*(M;\F_2)\cong H_*(S^1\times S^2;\F_2)$;\n

              \item[(c)] $r=3$, $H_*(Q;\F_2)\cong H_*(M;\F_2)\cong H_*(S^1\times S^1\times
             S^1;\F_2)$.
            \end{itemize}
    \end{cor}

     Theorem~\ref{thm:Main1} and
    Corollary~\ref{Cor:3-manifolds} motivate us to ask the following question.\nn

    \noindent \textbf{Question:} If there exists a free $(\Z_p)^r$-action ($r\geq 1$) on a closed
     connected manifold $M$ where $\mathrm{hrk}(M;\F_p) = 2^r$, then does  $H_*(M;\F_p)$
     necessarily agree with
       $H_*(S^{n_1}\times \cdots \times S^{n_r};\F_p)$ for some
       integer $n_1,\cdots, n_r$?\n

    All the examples known to the author so far
     give positive answer to
    the above question. Moreover,
    we can strengthen the question by replacing the ``closed connected manifold''
    by ``path-connected finite CW-complex'' and ask whether the same conclusion holds.\n

  The paper is organized as follows.
   In Section 2, we study finitely presented groups and their presentation 
   complexes. We prove that any finite presentation of a group can be
  transformed to another presentation of the group with the same number of generators and
  relators which satisfies some special property.
   In Section 3, we obtain a family of lower bounds of the rank of the first homology of
  a normal subgroup $N$ in a group $G$ with $G\slash N \cong (\Z_p)^r$.
  In Section~\ref{Sec3}, we use these lower bounds to prove Theorem~\ref{thm:Main1},
  Theorem~\ref{thm:Main2}, Theorem~\ref{thm:Main3} and Corollary~\ref{Cor:3-manifolds}.
  To make the paper more self-contained, we put two appendixes in the end. 
  In Appendix-1, we review some basic facts of group rings. In Appendix-2, we study
  the property of a family of integers $|\Omega^m_{p,r}|$ that are useful for 
  our arguments.  \\

 \section{Finitely presented groups and presentation
   complexes}\label{Sec2}

  Suppose $G$ be a finitely presentable group. Let
     $\mathcal{P} = \langle a_1\cdots, a_n \,| \, R_1,\cdots, R_m \rangle $ be
      a finite presentation of $G$. The integer $n-m$ is called the \emph{deficiency} of
       $\mathcal{P}$.
      When $m=n$, $\mathcal{P}$ is called a \emph{balanced
      presentation}. The \emph{deficiency} of $G$
      is the maximum over all its finite presentations, of the deficiency of each presentation.\nn

    Any finite presentation $\mathcal{P}$ canonically determines
      a $2$-dimensional CW-complex $K_{\mathcal{P}}$ called the
      \emph{presentation complex} of $\mathcal{P}$.
         \n
     \begin{itemize}
       \item $K_{\mathcal{P}}$ has a single vertex $q_0$,
       and one oriented $1$-cell $\gamma_j$ attached to $q_0$ for each generator $a_j$
    ($1\leq j \leq n$). So the $1$-skeleton of $K_{\mathcal{P}}$ is a bouquet of $n$ circles
     attached to $q_0$.\n

       \item $K_{\mathcal{P}}$ has one oriented $2$-cell $\beta_i$ for each 
       relator $R_i$ ($1\leq i \leq
       m$), where $\beta_i$ is attached to the $1$-skeleton of 
       $K_{\mathcal{P}}$ via a map defined by
         $R_i$.\n
     \end{itemize}

  The following are some well known facts. For any prime $p$,\nn
  \begin{itemize}
    \item $H_1(K_{\mathcal{P}};\F_p) \cong H_1(G;\F_p) \cong G\slash [G,G] G^p$, so 
      $b_1(K_{\mathcal{P}};\F_p) = b_1(G;\F_p)$.\nn
      
      \item $b_2(K_{\mathcal{P}};\F_p) \geq b_2(G;\F_p)$.
   \end{itemize}
      
 \begin{lem} \label{Lem:b1-b2}
  Suppose a group $G$ admits a finite presentation of deficiency $d$, i.e. 
  the deficiency of $G$ is greater or equal to $d$. Then $b_1(G;\F_p) - b_2(G;\F_p) \geq d $.
   \end{lem}
    \begin{proof}
  Let $ \mathcal{P}= \langle a_1,\cdots, a_n \, | \, R_1,\cdots, R_{n-d} \rangle$
    be a deficiency-$d$ presentation of $G$. 
      The Euler characteristic of the presentation complex $K_{\mathcal{P}}$ is:
     $$\chi(K_{\mathcal{P}}) = 1 - b_1(K_{\mathcal{P}};\F_p) + b_2(K_{\mathcal{P}};\F_p) 
     = 1 -n + (n-d) = 1-d.$$
    So $ b_1(G;\F_p) = b_1(K_{\mathcal{P}};\F_p) = 
         b_2(K_{\mathcal{P}};\F_p) + d $. Then since $b_2(K_{\mathcal{P}};\F_p) \geq b_2(G;\F_p)$,
         we obtain $b_1(G;\F_p) - b_2(G;\F_p) \geq d $.
  \end{proof}
 \nn

     Let
    $(C_*(K_{\mathcal{P}};\F_p), \partial^{\mathcal{P}}_*)$ be the
      cellular chain complex of $K_{\mathcal{P}}$ with $\F_p$-coefficients.
       \[
         0\longrightarrow C_2(K_{\mathcal{P}};\F_p)
         \overset{\partial^{\mathcal{P}}_2}{\longrightarrow} C_1(K_{\mathcal{P}};\F_p)
           \overset{\partial^{\mathcal{P}}_1}{\longrightarrow} C_0(K_{\mathcal{P}};\F_p) \longrightarrow 0
       \]
      \[ \text{Then}\ \,
      \
       C_1(K_{\mathcal{P}};\F_p) = \bigoplus^{n}_{j=1} \langle\gamma_j\rangle, \ \,
       C_2(K_{\mathcal{P}};\F_p) = \bigoplus^{m}_{i=1} \langle\beta_i\rangle,\
      \qquad
       \]
        where $\langle \gamma_j\rangle$ and $\langle\beta_i\rangle$
       are the subspaces of $C_1(K_{\mathcal{P}};\F_p)$ and $C_2(K_{\mathcal{P}};\F_p)$
       spanned by $\gamma_i$ and $\beta_j$, respectively.
       The map $\partial^{\mathcal{P}}_2$ can be represented by a matrix $A=(a_{ij})_{n\times
       m}$, $a_{ij}\in \F_p$, so that
   \[
    \partial^{\mathcal{P}}_2(\beta_1,\cdots, \beta_{m}) = (\gamma_1,\cdots,
    \gamma_{n})   \begin{pmatrix}
        a_{1,1} & \cdots & a_{1,m}    \\
        \vdots & \cdots & \vdots  \\
        a_{n, 1} & \cdots & a_{n, m}
   \end{pmatrix}
    \]
  It is clear that
   $ \dim_{\F_p}\mathrm{ker}(\partial^{\mathcal{P}}_2) = b_2(K_{\mathcal{P}};\F_p)$ and
   so $\mathrm{rank}_{\F_p}(A) = m-b_2(K_{\mathcal{P}};\F_p)$. In addition, by the Euler
    characteristic of $K_{\mathcal{P}}$, we have:
    \[ \chi(K_{\mathcal{P}}) = 1-n +m =1-b_1(K_{\mathcal{P}};\F_p) + b_2(K_{\mathcal{P}};\F_p) . \]
    So we get $b_2(K_{\mathcal{P}};\F_p) = b_1(K_{\mathcal{P}};\F_p) - n
    +m$, and so
    \[
    \ \  \mathrm{rank}_{\F_p}(A) = n -  b_1(K_{\mathcal{P}};\F_p) = n - b_1(G;\F_p).
     \]

    It is easy to see that we can use two types of
   elementary transformations to turn the matrix $A$ into its \emph{Smith normal
   form} (we do not require the nonzero entries in the Smith normal form
    to be $1 \in \F_p$).
    \nn
    \begin{enumerate}
      \item[Type 1:] multiply one row (or column) of $A$ by a nonzero element of $\F_p$ and
      then add it to another row (or column),\nn

      \item[Type 2:] switch two rows (or two columns) of $A$.\nn
    \end{enumerate}
    In other words, there exists a sequence of elementary transformation
   matrices $P_1,\cdots, P_s$ and $Q_1, \cdots, Q_t$ over $\F_p$ so that
   \begin{equation}\label{Equ:Matrix}
      P_s\cdots P_1 A Q_1\cdots Q_t = \begin{pmatrix}
         D & \mathbf{0}    \\
        \mathbf{0} & \mathbf{0}
     \end{pmatrix}
   \end{equation}
   where $D$ is a diagonal matrix of size
   $n -  b_1(G;\F_p)$ whose diagonals are nonzero elements of
   $\F_p$, and $P_1,\cdots, P_s, Q_1,\cdots, Q_t$ are matrices with one of the following
   forms:
   \[
      T_{ij}(q) = \bordermatrix{%
         &  & &  i & & j & & \cr
         & 1 &  & & & &  &  \cr
         & & \ddots & & & &  &  \cr
       i & &  &  1 & & &  &  \cr
         & &  & &  \ddots & &  &  \cr
       j & &  &  q & &  1 &  &  \cr
         & &  & & & & \ddots  &  \cr
         & &  & & & &  & 1
         },\  q\in \F_p;  \]
   \[ \mathrm{or} \ \;  S_{ij} = \bordermatrix{%
         &  & &  i & & j & & \cr
         & 1 &  & & & &  &  \cr
         & & \ddots & & & &  &  \cr
       i & &  &  0 & & 1 &  &  \cr
         & &  & &  \ddots & &  &  \cr
       j & &  &  1 & &  0 &  &  \cr
         & &  & & & & \ddots  &  \cr
         & &  & & & &  & 1
         }. \qquad \qquad
   \]\nn
   Notice that $T^{-1}_{ij}(q) = T_{ij}(-q)$ and $S^{-1}_{ij} = S_{ij}$.\nn

   \begin{rem}
    In the process of turning $A$ into its Smith normal form, we do not need
   to multiply one row (or column) by a nonzero scalar in $\F_p$
   since we do not require the nonzero entries in the Smith normal form
   to be $1 \in \F_p$.
   \end{rem}

   Next, we wish to transform the presentation $\mathcal{P}$ so that
   the boundary map $\partial^{\mathcal{P}}_2$ in $(C_*(K_{\mathcal{P}};\F_p), \partial^{\mathcal{P}}_*)$ has a simpler form. 
   The transformations we are going to use are called Nielson transformations.
     \nn
  \begin{defi}[Nielsen transformation]
    The following transformations of a set $\{ W_1,\cdots, W_m\}$ of
    freely reduced words in the free group $F = F[a_1,\cdots, a_k]$
    are called \emph{elementary Nielsen transformations of words}:
     \begin{itemize}
       \item[(i)]  permuting the $W_i$ and taking inverses of some of them;
       \item[(ii)] leaving fixed all $W_i$, $i\neq j$, and replacing
       $W_j$ by the freely reduced form of any one of the following:
        $W_jW^q_i$, $W_i W_j W^{-1}_i$ and $W^{-1}_i W_j W_i$ where $q\in \Z$ and $1\leq j < i \leq
        m$.
     \end{itemize}

     Let $\mathcal{P}=\langle a_1,\cdots, a_n \, |\, R_1,\cdots, R_m \rangle$
    be a presentation for a group $G$. 
    \begin{itemize}
      \item If an elementary
  Nielsen transformation is applied to the set of
  relators $\{ R_1,\cdots, R_m \}$, it produces a set of relators $\{ R^*_1,\cdots, R^*_m \}$
  for a new presentation
  $\mathcal{P}^* = \langle a_1,\cdots, a_{n} \, |\, R^*_1,\cdots, R^*_m \rangle$ of $G$.\n
    \item If an elementary Nielsen transformation is applied to the set
  $\{ a_1,\cdots, a_n  \}$ of generators, it produces an alternative
  set of free generators $\{ a'_1,\cdots, a'_n \}$ for the free group $F[a_1,\cdots,
  a_n]$. The relators $ R_1,\cdots, R_m $, when written as
  reduced words in the new generators, determine relators $R'_1,\cdots, R'_m$
  for a new presentation $\mathcal{P}'=\langle a'_1,\cdots, a'_n \, |\, R'_1,\cdots, R'_m \rangle$
  of $G$.
  \end{itemize}
  
    The two transformations $\mathcal{P} \rightarrow \mathcal{P}^*$
  and $\mathcal{P} \rightarrow \mathcal{P}'$ are called \emph{elementary
  Nielsen transformations of presentations}. Finite compositions of
  such are referred to as \emph{Nielsen transformations of presentations}.
  Two presentations are called \emph{Nielsen equivalent} if they can be related by
  Nielsen transformations.
 \end{defi}
 
  It is clear that Nielsen
  transformations of a presentation $\mathcal{P}$ do not change the number of generators and
  relators of $\mathcal{P}$. Moreover, \cite[Proposition 2]{DyerSier73} shows that Nielsen
  transformations do not change the simple
  homotopy type of the presentation complex
  $K_{\mathcal{P}}$ of $\mathcal{P}$.\nn

\begin{lem} \label{Lemma:Presentation}
  For any finite presentation $\mathcal{P}=\langle a_1,\cdots, a_{n} \, |\, R_1,\cdots,
  R_{m}\rangle$ of a group $G$, there exists another presentation
  $\widehat{\mathcal{P}}=\langle \widehat{a}_1,\cdots, \widehat{a}_{n} \, | \,
    \widehat{R}_1 ,\cdots, \widehat{R}_{m} \rangle$ of $G$
     so that:
     \begin{enumerate}
       \item[(i)] $\widehat{\mathcal{P}}$ is Nielsen equivalent to
       $\mathcal{P}$.\n

       \item[(ii)] the boundary map $\partial^{\widehat{\mathcal{P}}}_2$ in the cellular chain complex
       $(C_*(K_{\widehat{\mathcal{P}}};\F_p), \partial^{\widehat{\mathcal{P}}}_*)$ of
     $K_{\widehat{\mathcal{P}}}$ is represented by an $n\times m$
     matrix
  $\begin{pmatrix}
         D & \mathbf{0}    \\
        \mathbf{0} & \mathbf{0}
     \end{pmatrix}$ where $D$ is a diagonal square matrix of size
   $n -  b_1(G;\F_p)$ whose diagonals are nonzero elements of
   $\F_p$.
     \end{enumerate}
\end{lem}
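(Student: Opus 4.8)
The plan is to translate the elementary matrix operations used to put $A$ into Smith normal form \eqref{Equ:Matrix} into Tietze transformations of the presentation $\mathcal{P}$ that change neither the group $G$ nor the numbers $n$ of generators and $m$ of relators. Performing the corresponding moves one after another, starting from $\mathcal{P}$, will then produce the required presentation $\widehat{\mathcal{P}}$, because the matrix of $\partial_2$ of the presentation complex transforms exactly the way $A$ does, and after the full reduction it becomes $\begin{pmatrix} D & \mathbf{0} \\ \mathbf{0} & \mathbf{0}\end{pmatrix}$.

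First I would build the dictionary between the two kinds of elementary operations and presentation moves. Recall that the $(j,i)$-entry of $A$ is the exponent sum of $a_j$ in $R_i$ reduced mod $p$. A Type~1 column operation $\mathrm{col}_i\mapsto\mathrm{col}_i+q\,\mathrm{col}_{i'}$ ($i\neq i'$, $q\in\F_p$) is realized by replacing the relator $R_i$ with $R_i R_{i'}^{\tilde q}$, where $\tilde q\in\Z$ is any integer lifting $q$: since $R_{i'}$ is kept, this is a Tietze transformation presenting the same $G$, and the $i$-th column of the new $\partial_2$-matrix is $\mathrm{col}_i+q\,\mathrm{col}_{i'}$ mod $p$. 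A Type~2 column switch is realized by reindexing the relators. A Type~1 row operation $\mathrm{row}_j\mapsto\mathrm{row}_j+q\,\mathrm{row}_{j'}$ is realized by the Nielsen change of generating tuple $(a_1,\dots,a_n)\mapsto(a_1,\dots,a_{j'}a_j^{-\tilde q},\dots,a_n)$, in which only the $j'$-th entry is altered, followed by rewriting every relator in terms of the new generators; a direct exponent-sum count shows that this replaces the row in slot $j$ by $\mathrm{row}_j+\tilde q\,\mathrm{row}_{j'}$ and leaves all other rows unchanged, and again $G$, $n$, $m$ are preserved. A Type~2 row switch is realized by reindexing the generators.

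With this dictionary in hand I would invoke \eqref{Equ:Matrix}: by that reduction (and the preceding remark, which shows that no scaling of a single row or column is needed) there is a finite sequence of Type~1 and Type~2 operations carrying $A$ to $\begin{pmatrix} D & \mathbf{0} \\ \mathbf{0} & \mathbf{0}\end{pmatrix}$ with $D$ diagonal of size $n-b_1(G;\F_p)$ and nonzero diagonal entries. Applying the corresponding presentation moves in order yields a chain of presentations of $G$, each with $n$ generators and $m$ relators, terminating in a presentation $\widehat{\mathcal{P}}$ whose boundary map $\partial^{\widehat{\mathcal{P}}}_2$ is represented by that block matrix; the size of $D$ is forced to be $n-b_1(G;\F_p)$ by the rank computation already carried out before the lemma. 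The only step requiring genuine care — rather than a real obstacle — is verifying the row-operation case: one must check that the chosen Nielsen substitution induces precisely the claimed elementary row operation on the mod-$p$ boundary matrix, and that each move stays a legitimate Tietze transformation of $G$ (which it does, since we only ever change the generating set or multiply a relator by another relator, and never add or delete generators or relators).
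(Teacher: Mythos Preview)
Your proposal is correct and follows essentially the same approach as the paper: both realize column operations on $A$ as Tietze moves on the relators (replacing $R_i$ by $R_iR_{i'}^{\tilde q}$ or reindexing) and row operations as Nielsen moves on the generating tuple, then run the sequence of elementary operations from~\eqref{Equ:Matrix} to land at a presentation $\widehat{\mathcal{P}}$ whose $\partial_2$-matrix is the Smith normal form block. Your explicit mention of lifting $q\in\F_p$ to $\tilde q\in\Z$ is a point the paper handles implicitly via its convention of treating elements of $\F_p$ as integers.
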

\begin{proof}
     For any sequence of freely reduced words $(\omega_1,\cdots, \omega_{m})$
     in  the free group $F[a_1,\cdots, a_{n}]$ generated by $a_1,\cdots, a_{n}$,
      let $N (\omega_1,\cdots, \omega_{m})$
     be the normal subgroup of $F[a_1,\cdots, a_{n}]$ generated by $\omega_1,\cdots,
     \omega_{m}$. For any matrix $T_{ij}(q)$, define
     \begin{equation}\label{Equ:T_ij(q)}
     (\omega_1,\cdots, \omega_{m}) T_{ij}(q) = (\omega_1,\cdots, \omega_{i-1},\omega_i\omega^q_j, \omega_{i+1},
     \cdots, \omega_{m}), \qquad \quad
     \end{equation}
     where $q\in \Z_p$ here is thought of as an integer in $\{ 0,\cdots, p-1
     \}$. For any $S_{ij}$, define
       \begin{equation}\label{Equ:S_ij}
         (\omega_1,\cdots, \omega_{m}) S_{ij} = (\omega_1,\cdots, \omega_{i-1}, \omega_j, \omega_{i+1},
         \cdots, \omega_{j-1}, \omega_{i}, \omega_{j+1}, \cdots, \omega_{m}).
     \end{equation}
     \n
        Suppose $\partial^{\mathcal{P}}_2$ in $(C_*(K_{\mathcal{P}};\F_p), \partial^{\mathcal{P}}_*)$
      is represented by a matrix $A$ with the form
      in~\eqref{Equ:Matrix}. Let $(\widetilde{R}_1,\cdots, \widetilde{R}_{m}) =
     (R_1,\cdots, R_{m}) Q_1 \cdots Q_t$. So the following presentation
    \begin{equation}\label{Equ:G2}
     \widetilde{\mathcal{P}} = \langle a_1,\cdots, a_{n}\, | \, \widetilde{R}_1,\cdots, \widetilde{R}_{m} \rangle
    \end{equation}
    is Nielsen equivalent to $\mathcal{P}$ by~\eqref{Equ:T_ij(q)}~\eqref{Equ:S_ij}.
    Let $K_{\widetilde{\mathcal{P}}}$ be the presentation complex of
  $\widetilde{\mathcal{P}}$. Then in the chain complex
  $(C_*(K_{\widetilde{\mathcal{P}}};\F_p),
  \partial^{\widetilde{\mathcal{P}}}_*)$, the boundary map
  $\partial^{\widetilde{\mathcal{P}}}_2$ is represented by $A Q_1 \cdots Q_t$.
 Moreover, let
 $$ \qquad (a'_1,\cdots, a'_{n}) = (a_1,\cdots, a_{n})
 T^{-1}_{ij}(q)= (a_1,\cdots, a_{n}) T_{ij}(-q),$$
 $$(a''_1,\cdots, a''_{n}) = (a_1,\cdots, a_{n}) S^{-1}_{ij} = (a_1,\cdots, a_{n}) S_{ij}. $$
   \[ \text{So by definition,}\ \
      a'_l = \left\{
         \begin{array}{ll}
           a_l , & \hbox{\text{$l\neq i$};} \\
           a_ia^{-q}_j, & \hbox{\text{$l=i$}.}
         \end{array}
       \right.\ \
    a''_l = \left\{
         \begin{array}{ll}
           a_l , & \hbox{\text{$l\neq i,j$};} \\
           a_j , & \hbox{\text{$l=i$};} \\
           a_i, & \hbox{\text{$l=j$}.}
         \end{array}
       \right.
   \qquad \qquad \ \]

  Replacing the generators $a_1,\cdots, a_{n}$
 by $a'_1,\cdots, a'_{n}$ and $a''_1,\cdots, a''_{n}$ respectively and
 rewrite the relators accordingly, we will get two new presentations of
 $G$:
  $$
   \widetilde{\mathcal{P}}'= \langle a'_1,\cdots, a'_{n}\, | \,
   \widetilde{R}'_1,\cdots, \widetilde{R}'_{m} \rangle,
   \
    \widetilde{\mathcal{P}}''=\langle a''_1,\cdots, a''_{n}\, | \,
    \widetilde{R}''_1,\cdots, \widetilde{R}''_{m} \rangle
   $$
  which, by definition, are both Nielsen equivalent to $\widetilde{P}$.
  Let $K_{\widetilde{\mathcal{P}}'}$ and $K_{\widetilde{\mathcal{P}}''}$ be the presentation complex
   of $ \widetilde{\mathcal{P}}'$ and $ \widetilde{\mathcal{P}}''$
   respectively. Then the boundary map
  $\partial^{\widetilde{\mathcal{P}}'}_2$ and  $\partial^{\widetilde{\mathcal{P}}''}_2$
  in the cellular chain complexes $C_*(K_{\widetilde{\mathcal{P}}'};\F_p)$ and $C_*(K_{\widetilde{\mathcal{P}}''};\F_p)$
  are represented by $T_{ij}(q) A Q_1 \cdots Q_t$ and $S_{ij} A Q_1 \cdots Q_t$
  respectively.  \nn

  By iterating the above process of transforming the
  presentation of $G$ according to the matrices $P_1,\cdots, P_s$, we will eventually get a new presentation of
  $G$
  \begin{equation}\label{Equ:G4}
   \widehat{\mathcal{P}} = \langle \widehat{a}_1,\cdots, \widehat{a}_{n} \, | \, \widehat{R}_1 ,\cdots, \widehat{R}_{m} \rangle
 \end{equation}
  whose generators
 $(\widehat{a}_1,\cdots, \widehat{a}_{n})= (a_1,\cdots, a_{n}) P^{-1}_1\cdots P^{-1}_s$.
  Let $K_{\widehat{\mathcal{P}}}$ be the presentation complex of
  $\widehat{\mathcal{P}}$. The map $\partial^{\widehat{\mathcal{P}}}_2$ in the cellular chain complex
  $(C_*(K_{\widehat{\mathcal{P}}};\F_p), \partial^{\widehat{\mathcal{P}}}_*)$ of $K_{\widehat{\mathcal{P}}}$
   is represented by $P_s\cdots P_1 A Q_1\cdots Q_t$ (see~\eqref{Equ:Matrix}).
  So the lemma is proved.
 \end{proof}
 \nn
   
  \section{Lower bounds of the first homology group
  of finite index normal subgroups}

   In this section, we first prove the following theorem which is the
  driving force behind the proofs of all other results of this paper.\nn

  \begin{thm}\label{thm:Homology-Estimate-H}
     Suppose a group $G$ admits finite presentation of deficiency $d$, i.e. the deficiency of $G$ is
     at least $d$.
    Then for any prime $p$ and any finite index normal subgroup $N$ of $G$ with $G\slash N\cong H$,
     \begin{equation*}
       b_1(N;\F_p) \geq 1 + b_1(G;\F_p)  \lambda^k_p(H) + d \, \sum^{k-1}_{j=0}
           \lambda^j_p(H) - |H| , \ \, \forall\,k\geq 0,
     \end{equation*}
      where $\lambda^k_p(H) = \dim_{\F_p}\Delta^k_{\F_p}(H)\slash
      \Delta^{k+1}_{\F_p}(H)$, $\Delta_{\F_p}(H)$ is the augmentation ideal of
      the group ring $\F_p[H]$. Note that $b_1(G;\F_p) \geq d$.
  \end{thm}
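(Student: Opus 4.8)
The plan is to realize $N$ as the fundamental group of a finite $2$-complex built as a regular cover of a normalized presentation complex of $G$, and then to bound $b_1(N;\F_p)=b_1(X;\F_p)$ from below by controlling $\mathrm{rank}\,\partial^X_2$ from above. First I would fix a finite presentation $\mathcal P$ of $G$ with $n$ generators and $m$ relators, $n-m=d$, and replace it by the presentation $\widehat{\mathcal P}$ produced by Lemma~\ref{Lemma:Presentation}, which has the same $n,m$ and for which the matrix of $\partial^{\widehat{\mathcal P}}_2$ has a single nonzero block, a diagonal matrix $D$ of size $\rho:=n-b_1(G;\F_p)$ with nonzero entries; since $D$ sits inside the $n\times m$ boundary matrix we have $\rho\le m$, i.e.\ $b_1(G;\F_p)\ge d$. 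Let $X\to K_{\widehat{\mathcal P}}$ be the regular covering corresponding to $N\triangleleft G$ with deck group $H=G/N$. As $G\twoheadrightarrow H$ the space $X$ is connected, so $b_0(X;\F_p)=1$ and $b_1(X;\F_p)=\dim_{\F_p}N/[N,N]N^p=b_1(N;\F_p)$. In the $H$-invariant cell structure of Section~\ref{Sec3} one has $C_0(X;\F_p)=\F_p[H]$, $C_1(X;\F_p)=\F_p[H]^n$, $C_2(X;\F_p)=\F_p[H]^m$, and $\mathrm{rank}\,\partial^X_1=|H|-b_0(X;\F_p)=|H|-1$, whence
\[
b_1(N;\F_p)=b_1(X;\F_p)=n|H|-(|H|-1)-\mathrm{rank}\,\partial^X_2 .
\]
Thus the theorem reduces to the rank bound
\[
\mathrm{rank}\,\partial^X_2\ \le\ m|H|-(m-\rho)\dim_{\F_p}\Delta^k_{\F_p}(H)+(n-\rho)\dim_{\F_p}\Delta^{k+1}_{\F_p}(H)\qquad(k\ge 0),
\]
after which expanding with $n-m=d$, $n-\rho=b_1(G;\F_p)$, $\dim\Delta^k_{\F_p}(H)-\dim\Delta^{k+1}_{\F_p}(H)=\lambda^k_p(H)$ and $|H|-\dim\Delta^k_{\F_p}(H)=\sum_{j=0}^{k-1}\lambda^j_p(H)$ (see~\eqref{Equ:lambda-k-H-2}) gives exactly the asserted inequality.

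Next I would pin down the block structure of $\partial^X_2$. Let $\mathbf B_{ij}$ ($1\le i\le m$, $1\le j\le n$) be the equivariant matrix indexed by $(H,I)$ recording the weight of $H\widetilde\gamma_j$ in $\partial^X_2(H\widetilde\beta_i)$, so that $\partial^X_2(v\widetilde\beta_i)=\sum_{j=1}^n (v*\mathbf B_{ij}(e))\,\widetilde\gamma_j$ by Lemma~\ref{Lem:v-A}. The covering map $\xi$ induces a chain map $\xi_\#\colon C_*(X;\F_p)\to C_*(K_{\widehat{\mathcal P}};\F_p)$ which in each $\F_p[H]$-slot is the augmentation $\eta_H$; comparing $\xi_\#\partial^X_2$ with $\partial^{\widehat{\mathcal P}}_2\xi_\#$ shows that $\eta_H(\mathbf B_{ij}(e))$ equals the corresponding entry of the matrix of $\partial^{\widehat{\mathcal P}}_2$. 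Hence, by the choice of $\widehat{\mathcal P}$, the block $\mathbf B_{ij}$ is \emph{unbalanced} exactly when $i=j\le\rho$ and is \emph{balanced} otherwise.

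The crux is a filtration estimate. Set $F^k C_2(X;\F_p):=\bigoplus_{i=1}^m \Delta^k_{\F_p}(H)\,\widetilde\beta_i$. By Lemma~\ref{Lem:A-Send-general} a balanced block sends $\Delta^k_{\F_p}(H)$ into $\Delta^{k+1}_{\F_p}(H)$, while an unbalanced one sends it into $\Delta^k_{\F_p}(H)$; since column $j$ of the block matrix contains an unbalanced block only for $j\le\rho$, reading off coordinates gives
\[
\partial^X_2\big(F^kC_2(X;\F_p)\big)\ \subseteq\ \bigoplus_{j\le\rho}\Delta^k_{\F_p}(H)\,\widetilde\gamma_j\ \oplus\ \bigoplus_{j>\rho}\Delta^{k+1}_{\F_p}(H)\,\widetilde\gamma_j ,
\]
a subspace of dimension $\rho\dim_{\F_p}\Delta^k_{\F_p}(H)+(n-\rho)\dim_{\F_p}\Delta^{k+1}_{\F_p}(H)$. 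Feeding this into the elementary inequality $\mathrm{rank}(f)\le\mathrm{rank}(f|_U)+\dim(V/U)$ (valid because $f(V)/f(U)$ is a quotient of $V/U$), applied to $f=\partial^X_2$, $V=C_2(X;\F_p)$, $U=F^kC_2(X;\F_p)$, together with $\dim\big(C_2(X;\F_p)/F^kC_2(X;\F_p)\big)=m\big(|H|-\dim_{\F_p}\Delta^k_{\F_p}(H)\big)$, produces exactly the rank bound above, and the theorem follows. I expect this filtration step to be the main point: once Lemma~\ref{Lemma:Presentation} has made all but $\rho$ of the blocks balanced, the entire mechanism behind the conjectured lower bound is that balanced blocks raise the augmentation-filtration degree by one and hence carry almost no rank at high filtration level, and the one-line inequality $\mathrm{rank}(f)\le\mathrm{rank}(f|_{F^kC_2})+\dim(C_2/F^kC_2)$ is what converts the qualitative Lemma~\ref{Lem:A-Send-general} into the stated quantitative estimate. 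No nilpotency of $\Delta_{\F_p}(H)$ is needed, since $\dim_{\F_p}\Delta^k_{\F_p}(H)$ and $\lambda^j_p(H)$ are defined for any finite $H$ and the argument is insensitive to whether the filtration~\eqref{Equ:Filtration-GroupRing} ever reaches $\{\widehat 0\}$.
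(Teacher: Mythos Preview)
Your proposal is correct and follows essentially the same route as the paper's proof. The only cosmetic difference is that the paper bounds $b_2(X;\F_p)=\dim_{\F_p}\ker(\partial^X_2)$ from below via the obvious inclusion $\ker(\partial^X_2|_{F^kC_2})\subseteq\ker(\partial^X_2)$ and then converts to $b_1$ using the Euler characteristic, whereas you bound $\mathrm{rank}\,\partial^X_2$ from above via $\mathrm{rank}(f)\le \mathrm{rank}(f|_U)+\dim(V/U)$ and plug directly into $b_1(X;\F_p)=n|H|-(|H|-1)-\mathrm{rank}\,\partial^X_2$; these two inequalities are literally equivalent by rank--nullity, and the key filtration step (balanced blocks push $\Delta^k_{\F_p}(H)$ into $\Delta^{k+1}_{\F_p}(H)$, the diagonal unbalanced ones merely preserve $\Delta^k_{\F_p}(H)$) is identical in both arguments.
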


   Let $ \mathcal{P}= \langle a_1,\cdots, a_n \, | \, R_1,\cdots, R_{n-d} \rangle$
    be a deficiency-$d$ presentation of $G$.  Let
   $K$ be the presentation complex of
   $\mathcal{P}$. Then $b_1(K;\F_p)= b_1(G;\F_p)$.\nn

    The Euler characteristic of $K$ is
     $$\chi(K) = 1 - b_1(K;\F_p) + b_2(K;\F_p) = 1 -n + (n-d) =
    1-d.$$
    $$ \Longrightarrow\ \ b_1(K;\F_p) = b_2(K;\F_p) + d \geq d,\ \text{so}\ b_1(G;\F_p)\geq d. $$
    By Lemma~\ref{Lemma:Presentation}, we can assume that
   in the cellular chain complex of $K$,
     \begin{equation} \label{Equ:ChainComplex-K}
         0\longrightarrow C_2(K;\F_p) \overset{\partial^K_2}{\longrightarrow}
             C_1(K;\F_p)
           \overset{\partial^K_1}{\longrightarrow} C_0(K;\F_p) \longrightarrow
           0,
      \end{equation}
      the boundary map $\partial^K_2$ is represented by
    an $n \times n-d$ matrix $ U = \begin{pmatrix}
         D & \mathbf{0}    \\
        \mathbf{0} & \mathbf{0}
     \end{pmatrix}$
    where $D$ is a diagonal matrix of size
   $n - b_1(G;\F_p)$ whose diagonals are nonzero elements of
   $\F_p$.
   So all the entries of $U$ are $0$ except $U_{ii}$, $1\leq i \leq n-b_1(G;\F_p)$.
 \nn

  Let $X$ be a regular covering space over $K$ corresponding to the normal subgroup 
  $N$ of $G \cong \pi_1(K)$. So we have
      $$  b_1(N;\F_p) = b_1(X;\F_p).$$
    Let $\xi: X \rightarrow K$ be the covering map.
     Let $q_0$ be the single $0$-cell in $K$ and let the set of oriented $1$-cells
   and $2$-cells of $K$ be
   $\{ \gamma_1,\cdots, \gamma_n \}$ and $\{ \beta_1,\cdots, \beta_{n-d} \}$.
    Then $X$ has a natural $H$-invariant cell
     structure induced from $K$ by $\xi$. We choose \n
     \begin{itemize}
       \item a point $x_0 \in \xi^{-1}(q_0)$;\n

       \item an oriented $1$-cell $\widetilde{\gamma}_j$ of $X$
     attached to $x_0$ so that $\xi(\widetilde{\gamma}_j) = \gamma_j$ ($1\leq j \leq
     n$);\n

       \item an oriented
     $2$-cell $\widetilde{\beta}_i$ of $X$ attached to $x_0$
   so that $\xi(\widetilde{\beta}_i) = \beta_i$  ($1\leq i \leq n-d$).\nn
     \end{itemize}
    Then the sets of $0$-cells, $1$-cells and $2$-cells in $X$
    are\n
   \begin{itemize}
     \item $0$-cells : $\{ h\cdot x_0 \,|\, h\in H \}$ denoted by $Hx_0$;\n

     \item $1$-cells : $\{ h\cdot \widetilde{\gamma}_j \,|\, h\in H, 1\leq j \leq n
     \}$ denoted by $H\widetilde{\gamma}_j$;\n
     \item $2$-cells : $\{ h\cdot \widetilde{\beta}_i \,|\, h\in H, 1\leq i \leq n-d
     \}$ denoted by $H\widetilde{\beta}_i$.\nn
   \end{itemize}

       We label $x_0$, $\widetilde{\gamma}_j$ and
      $\widetilde{\beta}_i$ by the identity $e\in H$, and label
    $h\cdot x_0$, $h\cdot \widetilde{\gamma}_j$ and $h\cdot \widetilde{\beta}_i$ by $h$ for
    any $h\in H$. Then every cell in $X$ is labeled by a unique
     element of $H$.
   \nn

     Let $(C_*(X; \F_p), \partial^X_*)$ be the
      cellular chain complex of $X$ with $\F_p$-coefficients.
      \begin{equation} \label{Equ:ChainComplex-X}
         0\longrightarrow C_2(X; \F_p) \overset{\partial^X_2}{\longrightarrow} C_1(X; \F_p)
           \overset{\partial^X_1}{\longrightarrow} C_0(X; \F_p) \longrightarrow 0
      \end{equation}
      \[ C_1(X; \F_p) = \bigoplus^n_{j=1} \langle H\widetilde{\gamma}_j\rangle,\ \,
       C_2(X; \F_p) = \bigoplus^{n-d}_{i=1} \langle H\widetilde{\beta}_i\rangle  \qquad \quad \]
       where $\langle H\widetilde{\gamma}_j\rangle$ and $ \langle H\widetilde{\beta}_i\rangle$
       are the free submodules of $C_1(X; \F_p)$ and $C_2(X; \F_p)$
       generated by the set $H\widetilde{\gamma}_j$ and
       $H\widetilde{\beta}_i$ over $\F_p$, respectively. Clearly,
        $$ \langle H\widetilde{\gamma}_j\rangle \cong
         \langle H\widetilde{\beta}_i\rangle \cong \F_p[H],$$
       The reader is referred to Appendix-1 for the basic facts of the group ring $\F_p[H]$. \nn

    Next, let us see what $\partial^X_2$ looks like with respect to the above $H$-invariant
     cell structure of $X$. Suppose \begin{equation}\label{Equ:Bound-1}
       \partial^X_2 \widetilde{\beta}_i = \sum^n_{j=1} \sum^{d_{ij}}_{s=1} h^{j}_{i,s}\cdot
       \widetilde{\gamma}_j,\ 1\leq i \leq n-d,\,  h^j_{i,s}\in H.
       \end{equation}
       Then since the cell structure of $X$ is $H$-invariant, we have
        \begin{equation}\label{Equ:Bound-2}
       \partial^X_2 (g\cdot \widetilde{\beta}_i) = \sum^n_{j=1} \sum^{d_{ij}}_{s=1} 
        (gh^{j}_{i,s})\cdot
       \widetilde{\gamma}_j,\  \forall g\in H, 1\leq i \leq n-d.
       \end{equation}
       
   Next, we fix an order of all the elements of $H$ and order
      the elements of $H\widetilde{\gamma}_j$ and $H\widetilde{\beta}_i$ accordingly. 
      Then we get an ordered basis
       of $C_1(X; \F_p)$ and $C_2(X; \F_p)$, denoted by
       $\{ H\widetilde{\gamma}_1,\cdots, H\widetilde{\gamma}_n
           \}$ and  $\{ H\widetilde{\beta}_1,\cdots,
          H\widetilde{\beta}_{n-d} \} $, respectively. Let
        \begin{equation} \label{Equ:B-def}
      \partial^X_2 \Big(
        H\widetilde{\beta}_1,
        \cdots,
         H\widetilde{\beta}_{n-d}
         \Big) =
     \Big(
      H\widetilde{\gamma}_1,
       \cdots,
       H\widetilde{\gamma}_n
      \Big) \begin{pmatrix}
        \mathbf{B}_{11} & \cdots & \mathbf{B}_{1,n-d}    \\
         \vdots & \cdots & \vdots \\
        \mathbf{B}_{n,1} & \cdots & \mathbf{B}_{n,n-d}
   \end{pmatrix} 
  \end{equation}
   where each $\mathbf{B}_{ij}$ ($1\leq i \leq n$, $1\leq j \leq n-d$) is a $|H|\times |H|$ matrix which tells us the
   weight of each $1$-cell $h\cdot\widetilde{\gamma}_j$ in
   the boundary of any $2$-cell $g\cdot\widetilde{\beta}_i$.
   Let
   \[
     \mathbf{B}=    \begin{pmatrix}
        \mathbf{B}_{11} & \cdots & \mathbf{B}_{1,n-d}    \\
         \vdots & \cdots & \vdots \\
        \mathbf{B}_{n,1} & \cdots & \mathbf{B}_{n,n-d}
   \end{pmatrix}.
   \]

   The rows and columns of each $\mathbf{B}_{ij}$ are both indexed by elements
  of $H$ with the given order. Let
  $\mathbf{B}_{ij}(g,h)\in \F_p$ denote the entry of $\mathbf{B}_{ij}$ with row index
  $g\in H$ and column index $h\in H$, and let $\mathbf{B}_{ij}(h)$ be the column
  vector of $\mathbf{B}_{ij}$ indexed by $h\in H$. In the following we identify $\mathbf{B}_{ij}(h)$ with the
  element $\sum_{g\in H} \mathbf{B}_{ij}(g,h) \delta_g \in \F_p[H]$.
  Then according to~\eqref{Equ:Bound-2}, we have
  \begin{equation}\label{Equ:Property-B}
    \mathbf{B}_{ij}(hh') = \delta_h* \mathbf{B}_{ij}(h'), \ \forall\, h, h'\in H.
  \end{equation}
   where $\ast$ is the product in the group ring $\F_p[H]$ 
  (see~\eqref{equ:ast-2}). In particular, we have
    \begin{equation} \label{Equ:Property-B2}
       \mathbf{B}_{ij}(h) = \delta_{h}* \mathbf{B}_{ij}(e), \ \forall\,h\in H.
    \end{equation}
   This means that all column vectors of $\mathbf{B}_{ij}$ are determined by
   the column $\mathbf{B}_{ij}(e)$. We will see that this property of $\mathbf{B}_{ij}$
   is the essential reason why we can derive the lower bound of
   $b_1(N;\F_p)$ in Theorem~\ref{thm:Homology-Estimate-H}.

  \begin{lem}\label{Lem:v-A}
      If we consider any $v\in\F_p[H]$ as a column vector, $\mathbf{B}_{ij}v =
        v*\mathbf{B}_{ij}(e)$.
\end{lem}
  \begin{proof}
   Suppose $v=\sum_{h\in H} l_h \delta_h$ where $l_h\in \F_p$. Then by~\eqref{Equ:Property-B2},
    we have
   $$ \mathbf{B}_{ij}v = \sum_{h\in H} l_h \mathbf{B}_{ij}(h) =
         \sum_{h\in H} l_h \big(\delta_h *\mathbf{B}_{ij}(e)\big) =
           \Big(\sum_{h\in H} l_h \delta_h \Big) *\mathbf{B}_{ij}(e)
     =v*\mathbf{B}_{ij}(e).$$
 \end{proof}

  In the rest of the paper, we do not distinguish the $|H|\times |H|$ matrix $\mathbf{B}_{ij}$ and
 the linear transformation on $\F_p[H]$ determined by $\mathbf{B}_{ij}$.\n
 
 \begin{lem} \label{Lem:A-Send-general}
   For any $k\geq 0$, $\mathbf{B}_{ij} : \F_p[H] \rightarrow \F_p[H]$ sends
        $\Delta^k_{\F_p}(H)$ into $\Delta^k_{\F_p}(H)$. Moreover, if
       $\mathbf{B}_{ij}(e) \in \Delta_{\F_p}(H) $, then 
       $\mathbf{B}_{ij}$ maps $\Delta^k_{\F_p}(H)$ into
      $\Delta^{k+1}_{\F_p}(H)$.\nn
 \end{lem}
 \begin{proof}
    Note that $\Delta^k_{\F_p}(H)$ ($k\geq 1$) is linearly spanned by the following set over $\F_p$
       \begin{equation} \label{Equ:Set-Phi}
       \{ (-\delta_e  + \delta_{g_1})*\cdots  *
       (-\delta_e  + \delta_{g_k}) \in \F_p[H],  \ \text{where} \ g_1,\cdots, g_k \in H\}
    \end{equation}
   For brevity, we define
    $v_{(g_1,\cdots, g_k)} := (-\delta_e + \delta_{g_1})*\cdots *
       (-\delta_e  + \delta_{g_k})$ for any $k\geq 1$.
      In particular, $v_{(h)} = -\delta_e + \delta_h$ for
    any $h\in H$.\n

   Suppose $\mathbf{B}_{ij}(e)= \sum_{h\in H} l_h \delta_h$, $l_h\in \F_p$. Then we have
     \begin{align*}
      \mathbf{B}_{ij}(e) &=  \big(\sum_{h\in H} l_h\big) \delta_e + \sum_{h\in H} (- l_h \delta_e +
        l_h \delta_h) \\
        & = \big(\sum_{h\in H} l_h\big) \delta_e + \sum_{h\in H} l_h (-\delta_e + \delta_h)
       =  \big(\sum_{h\in H} l_h\big) \delta_e + \sum_{h\in H} l_h v_{(h)}.
     \end{align*}

   So for any tuple $(g_1,\cdots, g_k)$ of elements in $H$, we have
   \begin{align*}
      \mathbf{B}_{ij}   v_{(g_1, \cdots , g_k)} &= v_{(g_1, \cdots ,g_k)} *\mathbf{B}_{ij}(e)
         = v_{(g_1 , \cdots , g_k)} * \Big(
      \big(\sum_{h\in H} l_h\big) \delta_e + \sum_{h\in H} l_h v_{( h )}  \Big)\\
       &= \big(\sum_{h\in H} l_h\big) v_{(g_1, \cdots , g_k)}  + \sum_{h\in H} l_h
  v_{(g_1, \cdots , g_k, h)}.
   \end{align*}
   Since $v_{(g_1, \cdots , g_k)} \in \Delta^k_{\F_p}(H)$,
   $ v_{(g_1 , \cdots , g_k, h)} \in \Delta^{k+1}_{\F_p}(H) \subset
  \Delta^k_{\F_p}(H)$, $\mathbf{B}_{ij}$ preserves $\Delta^k_{\F_p}(H)$. \nn

  If $\mathbf{B}_{ij}(e) \in \Delta_{\F_p}(H) $, i.e. $\sum_{h\in H} l_h=0$, we have
    $$ \mathbf{B}_{ij} v_{(g_1 , \cdots , g_k)}  =
      \sum_{h\in H} l_h v_{(g_1 , \cdots , g_k, h)} \in \Delta^{k+1}_{\F_p}(H).$$
   So $\mathbf{B}_{ij}$ maps $\Delta^k_{\F_p}(H)$ into
      $\Delta^{k+1}_{\F_p}(H)$ in this case. The lemma is proved.
 \end{proof}
 \nn
 
 \begin{proof}[\textbf{Proof of Theorem~\ref{thm:Homology-Estimate-H}}]\ \n
 
  To estimate $H_1(N;\F_p) = H_1(X;\F_p)$, we need to understand the kernal of
  $\partial^X_2 : C_2(X; \F_p)\rightarrow C_1(X; \F_p)$, which
    is given by the block matrix
    $\mathbf{B} = (\mathbf{B}_{ij})$
     \begin{equation}\label{Equ:B-map}
       \mathbf{B} :  \overset{n-d}{\overbrace{\F_p[H]\oplus \cdots \oplus \F_p[H]}} \, \longrightarrow
      \,  \overset{n}{\overbrace{\F_p[H]\oplus \cdots \oplus
      \F_p[H]}},
     \end{equation}
        where $\mathbf{B}_{ij}$ define a linear map from the $i$-th
        copy of $\F_p[H]$ on the left side to the $j$-th copy of $\F_p[H]$ on
        the right side of~\eqref{Equ:B-map}.\n

   By comparing $\partial^X_2$ with
    $\partial^K_2 : C_2(K;\F_p) \rightarrow C_1(K;\F_p)$ in~\eqref{Equ:ChainComplex-K},
     we observe that 
    \begin{equation} \label{Equ:B_ij_Balance}
      \qquad \quad
     \mathbf{B}_{ij}(e) \in \Delta_{\F_p}(H) \Longleftrightarrow U_{ij}=0.
    \end{equation}        
        So the form of $U$ (see~\eqref{Equ:ChainComplex-K}) implies that
        \begin{equation} \label{Equ:B_ij-U}
           \mathbf{B}_{ij}(e) \in \Delta_{\F_p}(H),\ 
              n-b_1(G;\F_p)+1 \leq i \leq n, 1\leq  j \leq n-d.
        \end{equation}    
            We claim that: for all $k\geq 0$,
    \begin{equation} \label{Equ:B-map-2}
      \overset{n-d}{\overbrace{\Delta^k_{\F_p}(H) \oplus \cdots \oplus
    \Delta^k_{\F_p}(H)}}
    \, \overset{\mathbf{B}}{\longrightarrow} \,
     \overset{n-b_1(G;\F_p)}{\overbrace{\Delta^k_{\F_p}(H)\oplus \cdots \oplus \Delta^k_{\F_p}(H)}}
       \oplus \overset{b_1(G;\F_p)}{\overbrace{\Delta^{k+1}_{\F_p}(H) \oplus \cdots \oplus
       \Delta^{k+1}_{\F_p}(H)}}.
      \end{equation}
  \n
  Indeed, for any $v_1,\cdots, v_{n-d} \in \Delta^k_{\F_p}(H) $, we have
    \[
     \mathbf{B} \begin{pmatrix}
         v_1 \\ \vdots \\ v_{n-d}
              \end{pmatrix} = \begin{pmatrix} \sum^{n-d}_{j=1} 
     \mathbf{B}_{1j}v_j \\ \vdots \\ \sum^{n-d}_{j=1}  \mathbf{B}_{nj} v_j
     \end{pmatrix}.
    \]
   By Lemma~\ref{Lem:A-Send-general}, each $\mathbf{B}_{ij}$
    maps $\Delta^k_{\F_p}(H)$ into $\Delta^k_{\F_p}(H)$ and,
    in particular when $\mathbf{B}_{ij}(e)\in \Delta_{\F_p}(H)$,
    $\mathbf{B}_{ij}$ maps $\Delta^k_{\F_p}(H)$ into $\Delta^{k+1}_{\F_p}(H)$.
     So by~\eqref{Equ:B_ij-U}, we have
    $$ \mathbf{B}_{ij} v_j \in  \Delta^{k+1}_{\F_p}(H), \ 
      n-b_1(G;\F_p)+1 \leq i \leq n, 1\leq  j \leq n-d.$$
    $$ \text{So}\ \ \sum^{n-d}_{j=1}  \mathbf{B}_{ij}v_j \in \Delta^{k+1}_{\F_p}(H), \
     n-b_1(G;\F_p)+1 \leq  i \leq n, \ \text{which prove the claim}. $$\nn

   From~\eqref{Equ:B-map-2}, we obtain the following inequality for each $k\geq 0$.
  \begin{align} \label{Equ:b2-XN}
    b_2 (X;\F_p)  &= \dim_{\F_p} \mathrm{ker}(\mathbf{B}) \notag\\
     &\geq (b_1(G;\F_p)-d) \dim_{\F_p}\Delta^k_{\F_p}(H) - b_1(G;\F_p)  
     \dim_{\F_p}\Delta^{k+1}_{\F_p}(H). \notag\\
      &\geq b_1(G;\F_p) \big(\dim_{\F_p}\Delta^k_{\F_p}(H) -  \dim_{\F_p}\Delta^{k+1}_{\F_p}(H)\big)
           - d \cdot\dim_{\F_p}\Delta^k_{\F_p}(H) \notag\\
    & \overset{\eqref{Equ:lambda-k-H-2}}{=}
      b_1(G;\F_p)\lambda^k_p(H)  -  d \cdot \Big(|H|- \sum_{0\leq j \leq k-1} \lambda^j_p(H) \Big)
  \end{align}
 
  In addition, the Euler characteristic of $X$ is
  \[
    \chi(X) = 1- b_1 (X;\F_p) + b_2 (X;\F_p) = |H|\cdot \chi(K) = (1-d) |H| .
    \]
    \begin{equation} \label{Equ:Euler-N}
       \Longrightarrow \ \ b_1(N;\F_p) = b_1 (X;\F_p)
          = 1 + b_2(X;\F_p) + (d-1) |H| . \qquad\qquad \ \
      \end{equation}
      
      By plugging~\eqref{Equ:b2-XN} into~\eqref{Equ:Euler-N}, we obtain
       \begin{align}
         b_1(N;\F_p)
          & \geq 1 + b_1(G;\F_p) \lambda^k_p(H) -
          d \cdot \Big(|H|- \sum_{0\leq j \leq k-1} \lambda^j_p(H) \Big) + (d-1) |H|
          \quad \notag\\
           & \geq 1 + b_1(G;\F_p)  \lambda^k_p(H) + d \sum_{0\leq j\leq k-1}
           \lambda^j_p(H) - |H| .  \notag
        \end{align}
   \end{proof}
   
  Next, we examine the special case of Theorem~\eqref{thm:Homology-Estimate-H}
  when $G\slash N = H \cong (\Z_p)^r$.
  First we recall a very useful result from~\cite{Jenning41}.
   \begin{lem}[Theorem 3.7 in~\cite{Jenning41}]\label{Lem:Jenning}
   For any $r\geq 1$, $k\geq 0$,
     $\lambda^k_p((\Z_p)^r)$ equals the coefficient of $x^k$ in the polynomial
      $(1+x+\cdots + x^{p-1})^r$.
  \end{lem}
  \n
   In other words, $\lambda^k_p((\Z_p)^r)$ equals the order $|\Omega^k_{p,r}|$ of the
   following set $\Omega^k_{p,r}$.
  \begin{equation} \label{Equ:Omega-def}
     \Omega^k_{p,r} =\{ (j_1,\cdots, j_r) \in \Z^r_{\geq 0}\ | \
   j_1+ \cdots + j_r = k,\, 0\leq j_i \leq p-1, \, i=1,\cdots, r \}.
    \end{equation}
 \nn
   Obviously, $|\Omega^k_{p,r}| >0$ if and only if $0\leq k \leq
   r(p-1)$, and we have
   $$\sum_{0\leq k\leq r(p-1)}  |\Omega^k_{p,r}| = p^r.   $$
    There is a bijection between the sets $\Omega^k_{p,r}$ and $\Omega^{r(p-1)-k}_{p,r}$
     where $ (j_1,\cdots, j_r) \in \Omega^k_{p,r} $ corresponds to $(p-1-j_1,\cdots,
     p-1-j_r) \in \Omega^{r(p-1)-k}_{p,r}$. So we have
     \begin{equation} \label{Equ:Omega-Sym}
        |\Omega^k_{p,r}| = |\Omega^{r(p-1)-k}_{p,r}|, \, 0\leq k \leq r(p-1). 
     \end{equation}   
  From
      $(1+x+\cdots + x^{p-1})^r = (1-x^p)^r (1-x)^{-r}$, we can compute $|\Omega^k_{p,r}|$ by:
   \begin{align} \label{Equ:Omega-Formula-1}
      |\Omega^k_{p,r}| &=  \sum_{0\leq j \leq \frac{k}{p}} (-1)^{k-(p-1)j} \binom{r}{j}
      \binom{-r}{k-pj} \notag\\
      & = \sum_{0\leq j \leq \frac{k}{p}} (-1)^j \binom{r}{j} \binom{k-pj+r-1}{k-pj}.
   \end{align}
   Obviously, $|\Omega^k_{p,r}| \geq |\Omega^k_{2,r}| = \binom{r}{k}$ for any prime $p$.
   More properties of these integers $|\Omega^k_{p,r}|$ are shown in Appendix-2.
 \nn
    By Theorem~\ref{thm:Homology-Estimate-H} and Lemma~\ref{Lem:Jenning}, 
    we obtain the following theorem immediately.\n
    
   \begin{thm} \label{thm:Homology-Estimate}
    Suppose a group $G$ admits a finite presentation of deficiency $d$. 
    Then for any prime $p$ 
    and any normal subgroup $N$ of $G$ with $G\slash N\cong (\Z_p)^r$,
    \begin{equation*}
       b_1(N;\F_p) \geq
       1 + b_1(G;\F_p) |\Omega^k_{p,r}| + d\, \sum^{k-1}_{i=0}
           |\Omega^i_{p,r}|- p^r \ \,\text{for each}\ 0\leq  k \leq r(p-1),
     \end{equation*}
   In particular, if $G\slash N \cong (\Z_2)^r$,
    \begin{equation*}
       b_1(N ;\F_2) \geq
         1 + b_1(G;\F_2) \binom{r}{k} + d \, \sum^{k-1}_{i=0}
         \binom{r}{i} - 2^r\ \, \text{for each}\   0\leq k \leq r.
     \end{equation*}
  \end{thm}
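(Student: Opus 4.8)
The plan is to deduce Theorem~\ref{thm:Homology-Estimate} directly from Theorem~\ref{thm:Homology-Estimate-H} by specializing the finite quotient to $H=(\Z_p)^r$ and then replacing the filtration quantities $\lambda^k_p(H)$ by combinatorially explicit numbers. First I would observe that for $H=(\Z_p)^r$ one has $|H|=p^r$, so the term $-|H|$ in the bound of Theorem~\ref{thm:Homology-Estimate-H} becomes $-p^r$, and the hypothesis ``$G/N\cong H$'' becomes ``$G/N\cong(\Z_p)^r$''. The side remark $b_1(G;\F_p)\geq d$ is already part of Theorem~\ref{thm:Homology-Estimate-H}, so it transfers verbatim. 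The only remaining work is to make the numbers $\lambda^j_p((\Z_p)^r)$ explicit.

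For that I would invoke Theorem~\ref{thm:Jenning} (Jennings' formula), which asserts precisely that $\lambda^k_p((\Z_p)^r)$ equals the coefficient of $x^k$ in $(1+x+\cdots+x^{p-1})^r$, i.e. $\lambda^k_p((\Z_p)^r)=|\Omega^k_{p,r}|$ in the notation fixed after Theorem~\ref{thm:Jenning}. Substituting this identity, term by term, into the inequality
\[
 b_1(N;\F_p)\geq 1+b_1(G;\F_p)\,\lambda^k_p(H)+d\sum_{j=0}^{k-1}\lambda^j_p(H)-|H|,
\]
which holds for every $k\geq 0$, yields the first displayed inequality of Theorem~\ref{thm:Homology-Estimate}. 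I would then note that the inequality is meaningful — and is stated — only in the range $0\leq k\leq r(p-1)$, since $|\Omega^k_{p,r}|=0$ for $k$ outside this range (equivalently, the Jennings filtration of $\F_p[(\Z_p)^r]$ reaches $\{\widehat{0}\}$ after the $r(p-1)$-th step); for larger $k$ the bound merely repeats the $k=r(p-1)$ case with the sum truncated, so restricting the range loses no information.

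Finally, for the case $G/N\cong(\Z_2)^r$ I would set $p=2$, so that $1+x+\cdots+x^{p-1}=1+x$ and hence $(1+x)^r=\sum_{k=0}^r\binom{r}{k}x^k$, giving $|\Omega^k_{2,r}|=\binom{r}{k}$ and $p^r=2^r$; plugging these into the general inequality produces the second displayed formula, valid for $0\leq k\leq r$.

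There is essentially no genuine obstacle here: the statement is a formal corollary of two results already in hand, namely the homological estimate of Theorem~\ref{thm:Homology-Estimate-H} and the computation of the Jennings filtration quotients of $\F_p[(\Z_p)^r]$ recorded in Theorem~\ref{thm:Jenning}. The only point requiring a moment's care is the bookkeeping of the summation index $k$ and confirming that $\sum_{j=0}^{k-1}\lambda^j_p(H)$ matches $\sum_{j=0}^{k-1}|\Omega^j_{p,r}|$ summand by summand, which is immediate once Jennings' identity is invoked.
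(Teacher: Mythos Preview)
Your proposal is correct and matches the paper's own approach exactly: the paper simply states that Theorem~\ref{thm:Homology-Estimate} follows from Theorem~\ref{thm:Homology-Estimate-H} combined with Theorem~\ref{thm:Jenning}, which is precisely the specialization $H=(\Z_p)^r$, $|H|=p^r$, $\lambda^k_p((\Z_p)^r)=|\Omega^k_{p,r}|$ that you carry out. Your additional remarks on the range $0\le k\le r(p-1)$ and the $p=2$ binomial case are accurate and only make explicit what the paper leaves implicit.
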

   
   \begin{rem}
   The condition $G\slash N \cong (\Z_p)^r$ implies that $r\leq b_1(G;\F_p)$. Indeed,
  let $\pi_N : G \rightarrow G\slash N \cong (\Z_p)^r$ be the quotient homomorphism.
  It is clear that 
  $$[G,G]G^p \subset \mathrm{ker}(\pi_N) =N.$$
   So from the fact that 
      $G\slash [G,G]G^p = H_1(G;\F_p)=(\Z_p)^{b_1(G;\F_p)}$, we obtain
   $$ (\Z_p)^r = G\slash N \cong \frac{G\slash [G,G]G^p}{N\slash [G,G]G^p}
   =\frac{(\Z_p)^{b_1(G;\F_p)}}{N\slash [G,G]G^p},\ \text{which implies}\
   r\leq b_1(G;\F_p). $$  
  \end{rem}
  \n   
  
 \begin{rem}
   The author was informed
    by a reviewer that Theorem~\ref{thm:Homology-Estimate} can also be derived 
    from a stronger result in Liam Wall's Ph.D thesis~\cite[Theorem 2.3.1]{Liam09}. The argument
    in~\cite{Liam09} uses some subtle choice of 
    group presentations taken from~\cite{Lack09} and Fox calculus. In addition,
    some other lower bounds of $b_1(N;\F_p)$ can be found in~\cite[Theorem 1.6]{Lack09}.
  \end{rem}   

        To give one a better idea of how powerful the lower bounds of
        $b_1(N;\F_p)$ are in Theorem~\ref{thm:Homology-Estimate},
        let us examine two special cases more carefully below, which
         leads to more straightforward lower bounds of $b_1(N;\F_p)$
            to fit our needs. \nn

  \begin{thm} \label{thm:Homology-Estimate-2}
     Let $G$ be a finitely presentable group with deficiency at least $1$.
     Then for any normal subgroup $N$ of $G$ with $G\slash N \cong (\Z_p)^r$, $r\geq 1$,
     $$  b_1(N;\F_p) \geq 2^{r-1}.$$
    In particular, if $ b_1(N;\F_p) = 2^{r-1}$, we must have
      $b_1(G;\F_p) = r = 1\ \text{or}\ 2$.
  \end{thm}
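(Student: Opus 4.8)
The plan is to deduce this from Theorem~\ref{thm:Homology-Estimate} by combining one elementary remark about $b_1(G;\F_p)$ with a choice of the free index $k$ near the middle of its range. Since $G\slash N\cong(\Z_p)^r$, the abelianization map induces a surjection $H_1(G;\F_p)=G\slash[G,G]G^p\twoheadrightarrow(\Z_p)^r$, so $b_1(G;\F_p)\geq r$ (and of course $b_1(G;\F_p)\geq d\geq 1$ as well). I would then apply Theorem~\ref{thm:Homology-Estimate} with $d=1$ and with $k=r(p-1)-[r/2]$. Rewriting $b_1(G;\F_p)|\Omega^k_{p,r}|+\sum_{j=0}^{k-1}|\Omega^j_{p,r}|$ as $\big(b_1(G;\F_p)-1\big)|\Omega^k_{p,r}|+\sum_{j=0}^{k}|\Omega^j_{p,r}|$, and using $\sum_{j=0}^{r(p-1)}|\Omega^j_{p,r}|=p^r$ together with the symmetry $|\Omega^j_{p,r}|=|\Omega^{r(p-1)-j}_{p,r}|$ from Lemma~\ref{Lem:Omega-Prop}(i), the bound takes the form
\[
 b_1(N;\F_p)\ \geq\ 1+\big(b_1(G;\F_p)-1\big)\,|\Omega^{[r/2]}_{p,r}|-\sum_{j=0}^{[r/2]-1}|\Omega^j_{p,r}|\ \geq\ 1+(r-1)\,|\Omega^{[r/2]}_{p,r}|-\sum_{j=0}^{[r/2]-1}|\Omega^j_{p,r}|,
\]
where I retain the middle expression for the equality discussion.

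To estimate the right-hand side I would use Section~\ref{Sec5}. Since $|\Omega^j_{p,r}|$ is non-decreasing for $0\leq j\leq[r(p-1)/2]$ (Lemma~\ref{Lem:Omega-Prop}, or two applications of Lemma~\ref{Lem:Omega-Prop-2}) and $[r/2]\leq[r(p-1)/2]$, the subtracted sum is at most $[r/2]\,|\Omega^{[r/2]-1}_{p,r}|$; by Lemma~\ref{Lem:Omega-Prop-2}, $|\Omega^{[r/2]-1}_{p,r}|\leq\frac{[r/2]}{\,r-[r/2]+1\,}|\Omega^{[r/2]}_{p,r}|$; and $|\Omega^{[r/2]}_{p,r}|\geq\binom{r}{[r/2]}$ (iterate Lemma~\ref{Lem:Omega-Prop-2}). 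Combining these gives
\[
 b_1(N;\F_p)\ \geq\ 1+\Big(r-1-\tfrac{[r/2]^2}{\,r-[r/2]+1\,}\Big)\binom{r}{[r/2]},
\]
so it only remains to verify the numerical inequality $1+\big(r-1-\tfrac{[r/2]^2}{r-[r/2]+1}\big)\binom{r}{[r/2]}\geq 2^{r-1}$ for every $r\geq 1$. Treating $r=2s$ and $r=2s+1$ separately, the coefficient of $\binom{r}{[r/2]}$ simplifies to $\tfrac{s^2+s-1}{s+1}$, respectively $\tfrac{s(s+4)}{s+2}$, and the inequality then follows from the elementary bound $\binom{2s}{s}\geq 4^s/(2\sqrt{s})$; a short computation shows it is strict as soon as $r\geq 3$ and degenerates to an equality precisely for $r=1$ and $r=2$. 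This numerical check is the one genuinely computational point of the proof, and the place where a little care is needed to pin down where strictness begins.

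Finally, suppose $b_1(N;\F_p)=2^{r-1}$. Because the last displayed lower bound is strictly larger than $2^{r-1}$ when $r\geq 3$, we must have $r\leq 2$. Moreover Theorem~\ref{thm:Homology-Estimate} applied with $k=r(p-1)$ (where $|\Omega^{r(p-1)}_{p,r}|=1$ and $\sum_{j<r(p-1)}|\Omega^j_{p,r}|=p^r-1$) gives $b_1(N;\F_p)\geq b_1(G;\F_p)$, hence $b_1(G;\F_p)\leq 2^{r-1}=r$; together with $b_1(G;\F_p)\geq r$ from the first paragraph this forces $b_1(G;\F_p)=r\in\{1,2\}$. (Equivalently, keeping $b_1(G;\F_p)$ rather than $r$ in the middle expression above adds $\big(b_1(G;\F_p)-r\big)|\Omega^{[r/2]}_{p,r}|\geq b_1(G;\F_p)-r$ to the lower bound, so $b_1(G;\F_p)>r$ is already incompatible with $b_1(N;\F_p)=2^{r-1}$.) Thus the only obstacle in the whole argument is the final arithmetic inequality; every other step is an immediate appeal to results already established in the paper.
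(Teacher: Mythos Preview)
Your argument is correct and lands on the same target inequality as the paper, but the route is genuinely different in two places worth noting.

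First, both proofs set $d=1$ in Theorem~\ref{thm:Homology-Estimate} and pick $k$ near the top of its range so that, after the symmetry $|\Omega^j_{p,r}|=|\Omega^{r(p-1)-j}_{p,r}|$, one is left with
\[
 b_1(N;\F_p)\ \geq\ 1+(r-1)\,|\Omega^{m}_{p,r}|-\sum_{j<m}|\Omega^j_{p,r}|
\]
for a small $m$ (the paper takes $m=[\tfrac{r+1}{2}]$, you take $m=[r/2]$; both are fine since $\binom{r}{[r/2]}=\binom{r}{[(r+1)/2]}$). Where the paper now proves a comparison ``Claim~1'' reducing everything to $p=2$ and then analyzes the $p=2$ case via a separate optimality ``Claim~2'', you instead bound the general-$p$ expression directly: monotonicity (Lemma~\ref{Lem:Omega-Prop}) controls the tail sum by $[r/2]\,|\Omega^{[r/2]-1}_{p,r}|$, Lemma~\ref{Lem:Omega-Prop-2} compares consecutive $|\Omega^j_{p,r}|$, and iterating Lemma~\ref{Lem:Omega-Prop-2} gives $|\Omega^{[r/2]}_{p,r}|\geq\binom{r}{[r/2]}$. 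This collapses both of the paper's claims into one step and leaves exactly the same kind of elementary endgame inequality in $\binom{r}{[r/2]}$; your parity split into $\tfrac{s^2+s-1}{s+1}\binom{2s}{s}\geq 2^{2s-1}-1$ and $\tfrac{s(s+4)}{s+2}\binom{2s+1}{s}\geq 2^{2s}-1$ is correct, with equality precisely at $r=1,2$.

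Second, for the equality clause the paper tracks which of its chain of inequalities can be tight. Your device is cleaner: invoking Theorem~\ref{thm:Homology-Estimate} once more with $k=r(p-1)$ gives $b_1(N;\F_p)\geq b_1(G;\F_p)$, and since $2^{r-1}=r$ for $r\in\{1,2\}$ this immediately pins $b_1(G;\F_p)$ to $r$ (using the surjection $H_1(G;\F_p)\twoheadrightarrow(\Z_p)^r$ for the reverse inequality). Overall your proof is a legitimate and slightly more streamlined alternative; the only ``real work'' left implicit is, as you say, the final numerical check, which should be written out for $r\leq 4$ and then handled by the central-binomial estimate for $r\geq 5$.
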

  \begin{proof}
    Set $d=1$ in Theorem~\ref{thm:Homology-Estimate}, we have for any
    $0 \leq k \leq r(p-1)$,
  \begin{align}
     b_1(N;\F_p)  &\geq  1 + b_1(G;\F_p) |\Omega^k_{p,r}| +  \sum^{k-1}_{i=0}
           |\Omega^i_{p,r}|- p^r  \notag \\
           & \geq 1 + (b_1(G;\F_p) -1)  |\Omega^k_{p,r}|
            + \sum^{k}_{i=0} |\Omega^i_{p,r}|- p^r  \notag \\
    &\geq  1 + (r-1)  |\Omega^k_{p,r}| -
  \sum^{r(p-1)}_{i=k+1} |\Omega^i_{p,r}|.
  \label{Equ:InEqu-0}
  \end{align}

   So to prove $b_1(N;\F_p)  \geq 2^{r-1}$, it is sufficient to
   show that there exists some $0 \leq k \leq r(p-1)$
   so that
   \[ \Pi^k_{p,r} := (r-1)  |\Omega^k_{p,r}| -
  \sum^{r(p-1)}_{i=k+1} |\Omega^i_{p,r}| \geq 2^{r-1}-1. \]
   \n

    \textbf{Claim 1:}  $\Pi^{r(p-1)-j}_{p,r} \geq \Pi^{r-j}_{2,r}$ for any prime $p$ and
    $0 \leq j \leq [\frac{r+1}{2}]$, and the equality holds only when $p=2$ or $j=1$.\nn

   The claim is trivial when $p=2$. When $p>2$, by~\eqref{Equ:Omega-Sym} we obtain
   \begin{align*}
     \Pi^{r(p-1)-j}_{p,r} &= (r-1) |\Omega^{j}_{p,r}| - \sum^{j-1}_{i=0}
      |\Omega^{i}_{p,r}|,\\
     \Pi^{r-j}_{2,r} &= (r-1) |\Omega^{j}_{2,r}| - \sum^{j-1}_{i=0}
       |\Omega^{i}_{2,r}|.
   \end{align*}
   Note that $|\Omega^{0}_{p,r}| =1$ for all prime $p$, so we have
   \begin{align*}
     \Pi^{r(p-1)-j}_{p,r} -  \Pi^{r-j}_{2,r}
   &= (r-1) \left(  |\Omega^{j}_{p,r}| -  |\Omega^{j}_{2,r}| \right)
      -  \sum^{j-1}_{i=1} \left( |\Omega^{i}_{p,r}| - |\Omega^{i}_{2,r}|
      \right).
   \end{align*}
   Moreover, Lemma~\ref{Lem:Omega-Prop-2} in Appendix-2 implies that for each $0\leq i < j \leq [\frac{r+1}{2}]
   \leq r$,
   $$ k_j := \frac{|\Omega^{j}_{p,r}|}{|\Omega^{j}_{2,r}|}
   \geq \frac{|\Omega^{i}_{p,r}|}{|\Omega^{i}_{2,r}|} :=k_i \geq 1
   \ \,  (\text{the equality holds only when}\ i=0,j=1).$$
   \begin{align*}
      \text{So}\ \  \Pi^{r(p-1)-j}_{p,r} -  \Pi^{r-j}_{2,r} &= (k_j-1) (r-1) |\Omega^{j}_{2,r}| -
      \sum^{j-1}_{i=1} (k_i-1) |\Omega^{i}_{2,r}|  \\
        &\geq (k_j-1) \Big(   (r-1) |\Omega^{j}_{2,r}| -
      \sum^{j-1}_{i=1} |\Omega^{i}_{2,r}|    \Big) \geq 0.
    \end{align*}
  The last ``$\geq$'' is because $|\Omega^{j}_{2,r}| =\binom{r}{j} \geq \binom{r}{i}=
    |\Omega^{i}_{2,r}|$
  when $1\leq i< j \leq [\frac{r+1}{2}]$.  \nn
 \n

     \textbf{Claim 2:} For a fixed $r\geq 1$, $\Pi^k_{2,r}$ reaches the maximum only
    at $k=[\frac{r+1}{2}]$. \nn

    Indeed, since $|\Omega^k_{2,r}| = \binom{r}{k}$,
     $\Pi^k_{2,r} - \Pi^{k-1}_{2,r}= r\binom{r}{k}
     -(r-1)\binom{r}{k-1}$. So

     \[ \Pi^k_{2,r} - \Pi^{k-1}_{2,r} >0 \ \Longleftrightarrow \
     \frac{r\binom{r}{k}}{ (r-1)\binom{r}{k-1}} =  \frac{r(r-k+1)}{(r-1)k} > 1
     \ \Longleftrightarrow \ k < \frac{r(r+1)}{2r-1} = \frac{r+1}{2-\frac{1}{r}}. \]
    So when $k \leq [\frac{r+1}{2}]$, $\Pi^k_{2,r} - \Pi^{k-1}_{2,r} > 0$.
    Similarly, we can show when $k \geq [\frac{r+1}{2}]+1$,
      $\Pi^k_{2,r} - \Pi^{k-1}_{2,r} < 0$. Therefore, $\Pi^k_{2,r}$ reaches the maximum at and only at
    $k=[\frac{r+1}{2}]$.
    The Claim 2 is proved.
    \nn

    By~\eqref{Equ:InEqu-0} and Claim 1, we have
    $$b_1(N;\F_p) \geq 1 + \Pi^{r(p-1)-[\frac{r}{2}]}_{p,r}
      \geq 1+ \Pi^{r-[\frac{r}{2}]}_{2,r} = 1+ \Pi^{[\frac{r+1}{2}]}_{2,r}.$$
    Next, we show
     \begin{equation} \label{Equ:Pi-Inequ}
     \Pi^{[\frac{r+1}{2}]}_{2,r} = (r-1)\binom{r}{[\frac{r+1}{2}]} - \sum^r_{i=[\frac{r+1}{2}] + 1}
      \binom{r}{i}
       \geq 2^{r-1}-1, \ \,\text{for}\ \forall\, r\geq 1.
     \end{equation}   \n

   \begin{itemize}
      \item  when $r=2t+1$, $t\geq 0$,  $[\frac{r+1}{2}] = t+1$,
    $$  \Pi^{[\frac{r+1}{2}]}_{2,r} =
   2t \binom{2t+1}{t+1} - \sum^{2t+1}_{j=t+2} \binom{2t+1}{j} = (2t+1) \binom{2t+1}{t+1} - 2^{2t}.$$
      So to prove $\Pi^{[\frac{r+1}{2}]}_{2,r} \geq 2^{r-1}-1 = 2^{2t}-1$, it is enough to
    prove
      \begin{equation} \label{Equ:InEqu-1}
         (2t+1) \binom{2t+1}{t+1} \geq 2^{2t+1} -1,\ \, \forall\,t\geq 0.
       \end{equation}
       \n

     \item  When $r=2t$, $t\geq 1$, $[\frac{r+1}{2}] = t$,
         $$   \Pi^{[\frac{r+1}{2}]}_{2,r} =
    (2t-1) \binom{2t}{t} - \sum^{2t}_{i=t+1} \binom{2t}{i}  = (2t-\frac{1}{2}) \binom{2t}{t} - 2^{2t-1}.$$
      So to prove $\Pi^{[\frac{r+1}{2}]}_{2,r} \geq 2^{r-1}-1 = 2^{2t-1}-1$, it is enough to
    prove
      \begin{equation} \label{Equ:InEqu-2}
         (2t-\frac{1}{2}) \binom{2t}{t} \geq 2^{2t} -1, \ \, \forall\, t\geq 1.
       \end{equation}
   \end{itemize}
   \n

 The proof of~\eqref{Equ:InEqu-1}
  and~\eqref{Equ:InEqu-2} is elementary and a bit tedious, so we leave it to the reader.
  Moreover, we find that\n
    \begin{itemize}
      \item  if the equality in~\eqref{Equ:InEqu-1} holds, $t$ must
      be $0$, hence $r=1$.\n

      \item  if the equality in~\eqref{Equ:InEqu-2} holds, $t$ must
      be $1$, hence $r=2$.
    \end{itemize}

  So when the equality $b_1(N;\F_p) = 2^{r-1}$ holds,
   the inequality in~\eqref{Equ:InEqu-0} implies that
   $b_1(G;\F_p) =r = 1$ or $2$. The theorem is proved.
  \end{proof}
  \n

  Notice that $\binom{r}{[\frac{r+1}{2}]}=\binom{r}{[\frac{r}{2}]}$.
  So~\eqref{Equ:Pi-Inequ} is equivalent to the following inequality
  \begin{equation}\label{Equ:Pi-Inequ-2}
      r\binom{r}{[\frac{r}{2}]} \geq 2^{r-1} +
      \sum^r_{i=[\frac{r+1}{2}]} \binom{r}{i} -1,\, \ \forall\, r\geq 1,
  \end{equation}
   where the equality holds only when $r=1$ or $2$.
  \nn
  \begin{thm} \label{thm:Homology-Estimate-3} 
     Suppose a group $G$ admits a balanced finite presentation. Then for any
     normal subgroup $N$ of $G$ with $G\slash N \cong (\Z_p)^r$, we have
     $$ b_1(N;\F_p) \geq  1+ r |\Omega^k_{p,r}| - p^r, \ 0\leq k \leq r(p-1).$$ 
    \end{thm}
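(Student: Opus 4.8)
The plan is to apply Theorem~\ref{thm:Homology-Estimate} with deficiency parameter $d=n-m$. A balanced presentation means $d=0$; but the point is that a group admitting a balanced presentation also admits a deficiency-$0$ presentation, so we may take $d=0$ in the estimate. Plugging $d=0$ into
\[
b_1(N;\F_p) \geq 1 + b_1(G;\F_p)\,|\Omega^k_{p,r}| + d\sum_{j=0}^{k-1}|\Omega^j_{p,r}| - p^r
\]
kills the middle sum and leaves $b_1(N;\F_p) \geq 1 + b_1(G;\F_p)\,|\Omega^k_{p,r}| - p^r$ for every $0\leq k\leq r(p-1)$. To get the strongest inequality I would choose $k$ to maximize $|\Omega^k_{p,r}|$; by Lemma~\ref{Lem:Omega-Prop}(iv) the maximum is attained at $k=[\tfrac{r(p-1)}{2}]$, giving
\[
b_1(N;\F_p) \geq 1 + b_1(G;\F_p)\,|\Omega^{[\frac{r(p-1)}{2}]}_{p,r}| - p^r.
\]
Since a balanced presentation has $n=m$ generators and relators, and by the Fact that $H_1(G;\F_p)\cong H_1(K_{\mathcal P};\F_p)$ together with the Euler-characteristic computation $\chi(K_{\mathcal P})=1-n+m=1$, one gets $b_1(G;\F_p)=b_2(K_{\mathcal P};\F_p)\geq 0$; more usefully, since $G$ has deficiency at least $0$ we always have $b_1(G;\F_p)\geq 0$, but to get the clean lower bound $1+r|\Omega^{[\frac{r(p-1)}{2}]}_{p,r}|-p^r$ stated in the theorem I need $b_1(G;\F_p)\geq r$. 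This holds because $G\slash N\cong(\Z_p)^r$ forces a surjection $H_1(G;\F_p)\twoheadrightarrow H_1((\Z_p)^r;\F_p)\cong(\F_p)^r$, hence $b_1(G;\F_p)\geq r$. Substituting $b_1(G;\F_p)\geq r$ into the displayed inequality yields
\[
b_1(N;\F_p) \geq 1 + r\,|\Omega^{[\frac{r(p-1)}{2}]}_{p,r}| - p^r,
\]
which is the asserted bound.

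For the equality case: if $b_1(N;\F_p) = 1 + r|\Omega^{[\frac{r(p-1)}{2}]}_{p,r}| - p^r$, then every inequality used in the chain must be an equality. In particular the step where I replaced $b_1(G;\F_p)$ by $r$ must be tight, i.e. $b_1(G;\F_p)=r$, because $|\Omega^{[\frac{r(p-1)}{2}]}_{p,r}|>0$ (it is the maximal coefficient, and $\sum_k|\Omega^k_{p,r}|=p^r>0$). This is exactly the claimed conclusion. One should also double-check that no slack is hidden in passing from Theorem~\ref{thm:Homology-Estimate-H}/Theorem~\ref{thm:Homology-Estimate} to this statement: the former was derived from the kernel estimate $b_2(X_N;\F_p)=\dim_{\F_p}\mathrm{ker}(\mathbf B)$, which holds with equality, and the Euler-characteristic identity $b_1(N;\F_p)=1+b_2(X_N;\F_p)+(d-1)|H|$ is also an identity, so the only inequalities are the estimate~\eqref{Equ:b2-XN} on $\dim\mathrm{ker}(\mathbf B)$ and the substitution $b_1(G;\F_p)\geq r$; tightness of the final bound thus forces $b_1(G;\F_p)=r$.

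The step I expect to require the most care is not the inequality itself — that is essentially a direct specialization of Theorem~\ref{thm:Homology-Estimate} with $d=0$ combined with Lemma~\ref{Lem:Omega-Prop}(iv) — but rather confirming that "admits a balanced finite presentation" legitimately feeds $d=0$ into a theorem whose hypothesis reads "deficiency at least $d$" (here $d=0$, which is vacuously fine but worth a sentence), and pinning down the equality analysis cleanly, since one must verify that the maximality of $|\Omega^{[\frac{r(p-1)}{2}]}_{p,r}|$ is strict enough that no other choice of $k$ could also give equality with a smaller $b_1(G;\F_p)$ — but since $d=0$ removes the cumulative sum entirely, only the single term $b_1(G;\F_p)|\Omega^k_{p,r}|$ survives, so a larger $|\Omega^k_{p,r}|$ is unambiguously better and the argument closes without further subtlety.
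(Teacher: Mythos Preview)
Your proposal is correct and follows essentially the same route as the paper: set $d=0$ in Theorem~\ref{thm:Homology-Estimate}, use $b_1(G;\F_p)\geq r$ (which you justify explicitly via the surjection $H_1(G;\F_p)\twoheadrightarrow(\F_p)^r$, whereas the paper leaves this implicit), and then optimize over $k$ using Lemma~\ref{Lem:Omega-Prop}(iv). Your equality analysis is more detailed than the paper's (which simply records the conclusion), but the logic is the same.
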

  \begin{proof}
    Set $d=0$ in Theorem~\ref{thm:Homology-Estimate}, we get for any
    $0\leq k \leq r(p-1)$,
    $$  b_1(N;\F_p) \geq 1+ b_1(G;\F_p) |\Omega^k_{p,r}| - p^r \geq
      1+ r |\Omega^k_{p,r}| - p^r.$$
    \end{proof}
  \nn
 
 \section{Free $(\Z_p)^r$-actions on finite CW-complexes} \label{Sec3}
    Suppose there is a free $(\Z_p)^r$-action on a finite path-connected CW-complex $X$.
    Let $\xi: X \rightarrow K=X\slash (\Z_p)^r$ be the orbit map.
      Up to homotopy equivalence, we may assume that $K$ has
   a single $0$-cell $q_0$. Let the set of $1$-cells of $K$
     be $\{ \gamma_1,\cdots, \gamma_n \}$ and the set of $2$-cells of $K$
     be $\{ \beta_1,\cdots, \beta_m \}$.
    Let $x_0\in \xi^{-1}(q_0)$ be a basepoint of $X$.
    Since $K$ and $X$ are both path-connected, we have
    \begin{equation}
      b_1(K;\F_p) \geq r \geq 1,
    \end{equation}
    \begin{equation}
         b_1(K;\F_p) = b_1(\pi_1(K,q_0);\F_p),\ \
        b_1(X; \F_p) = b_1(\pi_1(X,x_0);\F_p).
    \end{equation}

        The fundamental group $\pi_1(K,q_0)$ of $K$
  has a natural presentation $\mathcal{P}^K$ defined by the $2$-skeleton
  of $K$. The deficiency of $\mathcal{P}^K$ is $n-m$.
  In addition,  since $\xi_* :
  \pi_1(X,x_0) \rightarrow \pi_1(K,q_0)$ is a monomorphism, we can identify $\pi_1(X,x_0)$ with its image
   $\xi_*(\pi_1(X,x_0))$. In the rest, we think of $\pi_1(X,x_0)$ as
  a normal subgroup of $\pi_1(K,q_0)$ and so have
  $\pi_1(K,q_0)\slash \pi_1(X,x_0) \cong (\Z_p)^r$.
   This allows us to use
  the theorems in the previous section to estimate $b_1(\pi_1(X,x_0);\F_p)$.
  Another important relation between $K$ and $X$ is that their Euler
  characteristics satisfy
  \begin{equation}
    \chi(X) = p^r \chi(K).
 \end{equation}

   Notice that if $\chi(K)\neq 0$, $\mathrm{hrk}(X;\F_p) \geq |\chi(X)| = p^r |\chi(K)| \geq p^r \geq
     2^r$.
     So the essential difficulty in the Halperin-Carlsson Conjecture for
     free $\Z_p$-torus actions lies in the case of $\chi(K)=\chi(X)=0$.
     We will use the above conventions and notations
      for $X$ and $K$ in all the proofs below.\vskip .3cm

 \noindent \emph{\textbf{Proof of Theorem~\ref{thm:Main1}.}}
      Since the dimension of $X$ and $K$ is $2$, the Euler characteristic of $K$ and $X$ are
       \begin{align}
        \chi(K) &= 1 - b_1(K;\F_p)  + b_2(K;\F_p) =  m-n+1, \label{Equ:Chi-K} \\
        \chi(X) &= 1- b_1(X; \F_p) + b_2(X; \F_p) = p^r\chi(K).
        \label{Equ:Chi-X}
       \end{align}
       \n

   \textbf{Case 1}:  When $\chi(K) \leq -1$, $\chi(X) \leq - p^r$. This implies
     $$ b_1(X; \F_p) \geq p^r  + b_2(X; \F_p) + 1.$$
      So
   $ \mathrm{hrk}(X; \F_p) =  1 + b_1(X; \F_p) + b_2(X; \F_p) \geq
   2 b_2(X; \F_p)  + p^r  +2 \geq p^r + 2 $.
   So in this case,
    $\mathrm{hrk}(X; \F_p)$ is strictly greater than
   $2^r$. \nn

   \textbf{Case 2}: When $\chi(K)=0$, we get $m=n-1$, $\chi(X)=0$ and
   $b_1(X; \F_p) =  1 + b_2(X; \F_p) $.
   So the presentation $\mathcal{P}^K$ of $\pi_1(K,q_0)$
   has deficiency $1$. Then Theorem~\ref{thm:Homology-Estimate-2}
   implies $ b_1(X; \F_p) = b_1(\pi_1(X,x_0);\F_p) \geq
   2^{r-1}$. So we get
    $$\mathrm{hrk}(X; \F_p) = 1+ b_1(X; \F_p) +
    b_2(X; \F_p) =2 b_1(X; \F_p) \geq 2^r.$$

  Moreover, when $\mathrm{hrk}(X; \F_p) =2^r$, we have $b_1(X; \F_p) = 2^{r-1}$.
    So Theorem~\ref{thm:Homology-Estimate-2}
   implies that $b_1(K;\F_p)=r=1$ or $2$, and we must have:\nn

                  \begin{itemize}
                    \item $b_1(K;\F_p) = r=1, b_2(K;\F_p)=0$, $b_1(X; \F_p) =
                    1$, $b_2(X; \F_p) = 0 $. Then
                     $$H_*(K;\F_p) \cong H_*(X; \F_p) \cong H_*(S^1;\F_p).$$ \n

                    \item $b_1(K;\F_p) = r= 2, b_2(K;\F_p)=1$, $b_1(X; \F_p) = 2$,
                    $b_2(X; \F_p) = 1$. Then
                    $$ H_*(K;\F_p) \cong H_*(X; \F_p) \cong H_*(S^1\times
                    S^1;\F_p).$$
                  \end{itemize}

  \nn

\textbf{Case 3}: When $\chi(K) \geq 1$,
   $$ \mathrm{hrk}(X; \F_p) =  \chi(X) + 2 b_1(X; \F_p) = p^r\chi(K) + 2 b_1(X; \F_p) \geq p^r \geq 2^r.$$
    In particular, if $\mathrm{hrk}(X; \F_p) = 2^r$, we must have
    $$p=2, \ \ \chi(K)=1, \ \ b_1(X;\F_2) =0.$$
    Then~\eqref{Equ:Chi-K} and~\eqref{Equ:Chi-X} implies $m=n$ and $b_2(X;\F_2) =
    2^r-1$.
   So the natural presentation $\mathcal{P}^K$ of $\pi_1(K,q_0)$ is balanced.
 Then by Theorem~\ref{thm:Homology-Estimate-3}, we get
    \begin{align*}
       b_1(X;\F_2) = b_1(\pi_1(X,x_0);\F_2)
         \geq 1+ r |\Omega^{[\frac{r}{2}]}_{2,r}| - 2^r & = 1+ r \binom{r}{[\frac{r}{2}]} -
         2^r \\
       (\text{by}~\eqref{Equ:Pi-Inequ-2})\ \,  &\geq   \sum^r_{i=[\frac{r+1}{2}]}
       \binom{r}{i} -2^{r-1} \geq 0.
    \end{align*}
 \begin{itemize}
   \item  The first inequality holds only when $b_1(K;\F_2) = r$;\nn
   \item The second inequality holds only when  $r= 1$ or $2$
   (see~\eqref{Equ:Pi-Inequ-2});\nn
   \item The third inequality holds only when $r$ is odd.\nn
 \end{itemize}

    So $b_1(X;\F_2) =0$ implies $b_1(K;\F_2) = r = 1$
     and $b_2(X;\F_2)=1$. In this case,
    $$ p=2,\ r=1, \ \ H_*(K;\F_2)\cong H_*(\R P^2;\F_2), \ \, H_*(X;\F_2)\cong H_*(S^2;\F_2).$$

    This finishes the proof of Theorem~\ref{thm:Main1}. \qed \\

 \noindent \emph{\textbf{Proof of Theorem~\ref{thm:Main2}.}}\
    First of all, we have
     \begin{equation} \label{Equ:Chi-X-2}
     \chi(X)=\sum_{i\geq 0} b_{2i}(X;\F_p) -  \sum_{i\geq 0}
     b_{2i+1}(X;\F_p) = p^r\chi(K).
     \end{equation}
    In addition, since we assume that the deficiency
     of $\pi_1(K,q_0)$ is at least $1$, so
       $$ b_1(X;\F_p) \geq 2^{r-1}\ \, \text{(by Theorem~\ref{thm:Homology-Estimate-2})}.$$\nn

     \textbf{Case 1}: When $\chi(K) \leq -1$,~\eqref{Equ:Chi-X-2}
     implies
     $$ \sum_{i\geq 0} b_{2i+1}(X;\F_p) = \sum_{i\geq 0} b_{2i}(X;\F_p) +  p^r \cdot |\chi(K)| \geq p^r +1 \geq 2^r+1.$$
     $$\Longrightarrow\ \mathrm{hrk}(X;\F_p) = \sum_{i\geq 0} b_{2i+1}(X;\F_p) + \sum_{i\geq 0} b_{2i}(X;\F_p)
        \geq (2^r +1) + 1 = 2^r+2 > 2^r.$$\nn

     \textbf{Case 2}: When $\chi(K)=0$, $\chi(X)=0$.  Then
    $$ \sum_{i\geq 0} b_{2i}(X;\F_p) =  \sum_{i\geq 0} b_{2i+1}(X;\F_p) \geq b_1(X;\F_p) \geq 2^{r-1}.$$
  Hence $\mathrm{hrk}(X;\F_p) \geq 2^{r-1} + 2^{r-1} = 2^r$.
   If $\mathrm{hrk}(X;\F_p) = 2^r$ in this case, we must have $b_1(X;\F_p) =
   2^{r-1}$, which implies $b_1(K;\F_p)=r=1$ or $2$ by
   Theorem~\ref{thm:Homology-Estimate-2}.\nn

     \textbf{Case 3}: When $\chi(K)\geq 1$,~\eqref{Equ:Chi-X-2}
     implies that $\chi(X)\geq p^r$. So we obtain
      \begin{align*}
        \mathrm{hrk}(X;\F_p) & = \chi(X) + 2\sum_{i\geq 0}
        b_{2i+1}(X;\F_p)\\
       & \geq p^r + 2 b_1(X;\F_p) \geq p^r + 2\cdot 2^{r-1} = p^r + 2^r > 2^r. \qquad  \qed
      \end{align*}
 \nn

   \noindent \emph{\textbf{Proof of Theorem~\ref{thm:Main3}.}}\
 Since we assume that the deficiency of $\pi_1(K,q_0)$ is at least $0$,
 $\pi_1(K,q_0)$ admits a balanced presentation. Then
  by Theorem~\ref{thm:Homology-Estimate-3}, 
    \begin{align}
       b_1(X;\F_2)  = b_1(\pi_1(X,x_0);\F_2) & \geq  
       1 + b_1(K;\F_2) |\Omega^{[\frac{r}{2}]}_{2,r}| - 2^r \label{Equ:Estim-M-1} \\
               & = 1+ b_1(K;\F_2) \binom{r}{[\frac{r}{2}]} - 2^r   \\
           & \geq 1+ r \binom{r}{[\frac{r}{2}]} - 2^r. \label{Equ:Estim-M-2}
     \end{align}
  Since when $r \leq 3$,~\cite[Theorem 1.1]{Puppe09} already implies $\mathrm{hrk}(X;\F_2) \geq
    2^r$. So we only need to deal with the $r\geq 4$ case.
     It is easy to verify the following combinatorial inequality (we leave the proof to the
     reader).
       \begin{equation}\label{Equ:InEqu-3}
    r \binom{r}{[\frac{r}{2}]} \geq 3\cdot 2^{r-1}  \ \ \text{for all}\ r\geq 4
   \end{equation}
   So when $r\geq 4$, $b_1(X;\F_2) \geq 2^{r-1} +1 $. Then by repeating the
   discussion of the three cases $\chi(K)\leq -1$, $\chi(K)=0$ and $\chi(K) \geq 1$ as
   the previous proof, we can easily see that $\mathrm{hrk}(X;\F_2) \geq 2^r + 2 > 2^r$ when $r\geq 4$.
   So for all $r\geq 1$, we have $\mathrm{hrk}(X;\F_2) \geq  2^r$. And
   if $\mathrm{hrk}(X;\F_2) = 2^r$, it is necessary that $r\leq 3$.
   \nn
   It remains to show $b_1(K;\F_2) = r$ when $\mathrm{hrk}(X;\F_2) = 2^r$.
  Indeed, if in this case $b_1(K;\F_2) > r$, then let us consider the regular covering space
   $$\widehat{\xi}: \widehat{X} \longrightarrow K$$
   where the fundamental group of $\widehat{X}$ is isomorphic (via
   $\widehat{\xi}_*$) to the kernel of
   the canonical group homomorphism $\psi: \pi_1(K,q_0) \rightarrow H_1(K;\Z) \rightarrow H_1(K;\F_2)$.
  So the deck transformation group of $\widehat{X}$ is isomorphic
  to $(\Z_2)^{b_1(K;\F_2)}$. On the other hand, since $\xi: X\rightarrow K$ is a regular
  $(\Z_2)^r$-covering, $\xi_*(\pi_1(X,x_0))$ should contain
  $\mathrm{ker}(\psi) = \widehat{\xi}_* (\pi_1(\widehat{X},\widehat{x}_0))$
  and hence
  $\xi_*(\pi_1(X,x_0)) \slash \widehat{\xi}_* (\pi_1(\widehat{X},\widehat{x}_0))
   \cong (\Z_2)^{b_1(K;\F_2)-r}$.
   So $\widehat{X}$ is a regular
  $(\Z_2)^{b_1(K;\F_2)-r}$-covering of $X$. Then there exists a
  sequence
  \[ \widehat{X}=X_{b_1(K;\F_2)-r} \longrightarrow \cdots \longrightarrow X_1 \longrightarrow X_0 = X  \]
  where each $X_i$ is a non-trivial double covering of $X_{i-1}$, $1\leq i \leq b_1(K;\F_2)-r$.
 Then by Lemma~\ref{Lem:Double_Covering} below, we can derive that
    $$\mathrm{hrk}(\widehat{X};\F_2) < 2^{b_1(K;\F_2)-r}\cdot  \mathrm{hrk}(X;\F_2) =
  2^{b_1(K;\F_2)-r}\cdot 2^r =  2^{b_1(K;\F_2)}.$$
  But since $\widehat{X}$ is a regular $(\Z_2)^{b_1(K;\F_2)}$-covering over $K$,
   we have already confirmed $\mathrm{hrk}(\widehat{X};\F_2) \geq 2^{b_1(K;\F_2)}$. This contradiction implies that
   if $\mathrm{hrk}(X;\F_2) = 2^r$, we must have $ b_1(K;\F_2) = r \leq 3$.
  \qed \nn

    \begin{lem} \label{Lem:Double_Covering}
           For any double covering $p: \widetilde{B} \rightarrow B$,
           $b_i(\widetilde{B};\F_2) \leq  2 \cdot b_i(B;\F_2)$ for all $i\geq 0$. So
          $ \mathrm{hrk}(\widetilde{B};\F_2) \leq 2 \cdot
          \mathrm{hrk}(B;\F_2)$. In particular, $\mathrm{hrk}(\widetilde{B};\F_2) = 2 \cdot
          \mathrm{hrk}(B;\F_2)$ if and only if $\widetilde{B}$ is the trivial double 
          covering of $B$.
       \end{lem}
       \begin{proof}
          The Gysin sequence of $p: \widetilde{B} \rightarrow B$ with $\F_2$-coefficients reads:
      \begin{equation} \label{Equ:Gysin-Seq}
           \cdots \longrightarrow  H^{i-1}(B;\F_2) \overset{\phi_{i-1}}{\longrightarrow}
            H^i(B;\F_2)
            \overset{p^*}{\longrightarrow} H^i(\widetilde{B};\F_2) \longrightarrow H^i(B;\F_2)
            \overset{\phi_i}{\longrightarrow}  \cdots
      \end{equation}
         where $\phi_i(\gamma) = \gamma \cup e,\ \forall\, \gamma \in H^i(B;\F_2)$ and
         $e\in H^1(B;\F_2)$ is the first Stiefel-Whitney
         class (Mod 2 Euler class) of $\widetilde{B}$. Then
          by the exactness of the Gysin sequence, we have:
       \begin{align*}
          b_i(\widetilde{B};\F_2) &=  \dim_{\F_2} H^i(\widetilde{B};\F_2) \\
           &= \dim_{\F_2} H^i(B;\F_2) - \dim_{\F_2}\text{Im}(\phi_{i-1}) + \dim_{\F_2}\text{ker}(\phi_i) \\
            &= 2 \cdot \dim_{\F_2} H^i(B;\F_2) -  \dim_{\F_2}\text{Im}(\phi_{i-1}) -
            \dim_{\F_2}\text{Im}(\phi_i) \\
            & \leq  2 \cdot \dim_{\F_2} H^i(B;\F_2) = 2b_i(B;\F_2).
      \end{align*}
          If $p: \widetilde{B} \rightarrow B$ is a non-trivial double
           covering, then $e \neq 0 \in  H^1(B;\F_2)$. This implies
           that $\mathrm{Im}(\phi_0)$ in~\eqref{Equ:Gysin-Seq} is not zero, and so $ b_0(\widetilde{B};\F_2) < 2 b_0(B;\F_2)$,
           $b_1(\widetilde{B};\F_2) < 2 b_1(B;\F_2)$. Then
            $ \mathrm{hrk}(\widetilde{B};\F_2) < 2 \cdot
            \mathrm{hrk}(B;\F_2)$. So if  $ \mathrm{hrk}(\widetilde{B};\F_2) = 2 \cdot
            \mathrm{hrk}(B;\F_2)$, $\widetilde{B}$ must be the trivial double covering of
            $B$.
       \end{proof}
       \nn

 \noindent \emph{\textbf{Proof of Corollary~\ref{Cor:3-manifolds}.}}\
    Suppose a closed connected $3$-manifold $M$ admits a free $(\Z_2)^r$-action.
    Then the orbit space $M\slash(\Z_2)^r = Q$ is also a
    closed connected $3$-manifold.
     Since the fundamental group of $Q$
    admits a balanced presentation from any Heegaard splitting of $Q$,
    Theorem~\ref{thm:Main3} tells us that $\mathrm{hrk}(M;\F_2) \geq 2^r$.\nn

   In particular, when $\mathrm{hrk}(M;\F_2) = 2^r$, it is necessary that $ b_1(Q;\F_2) = r \leq 3$.
     Since
     $ \mathrm{hrk}(M;\F_2) = 2 +b_1(M;\F_2) + b_2(M;\F_2) = 2^r$
     and Poincar\'e duality, we get
      $$b_1(M;\F_2) = b_2(M;\F_2) = 2^{r-1} -1.$$
      In
      addition, since $Q$ is a closed connected, we have
     $b_1(Q;\F_2) = b_2(Q;\F_2)$. So all the possible cases for $\mathrm{hrk}(M;\F_2) = 2^r$ are:\nn

    \begin{itemize}
      \item $b_1(Q;\F_2) =r=1$, so $b_1(M;\F_2) =b_2(M;\F_2) = 0$ and
         $$H_*(Q;\F_2) \cong H_*(\R P^3;\F_2), \ \  H_*(M;\F_2) \cong
        H_*(S^3;\F_2).$$

      \item $b_1(Q;\F_2) =r=2$, so $b_1(M;\F_2) =b_2(M;\F_2) =1$ and
         $$H_*(Q;\F_2) \cong H_*(S^1\times \R P^2;\F_2), \ \ H_*(M;\F_2)\cong H_*(S^1\times
         S^2;\F_2).$$

      \item $b_1(Q;\F_2) =r=3$, so $b_1(M;\F_2) = b_2(M;\F_2)=3$ and
       $$H_*(Q;\F_2) \cong H_*(M;\F_2) \cong H_*(S^1\times S^1\times
         S^1;\F_2).\qquad \qquad\qquad $$
      \end{itemize}

       So Theorem~\ref{thm:Main2} is proved. Note that all the 
       above three cases can be realized by free $(\Z_2)^r$-actions on some
       concrete $3$-manifolds.\qed\vskip .4cm
       
      \begin{rem}
        For a finite CW-complex $X$ which admits a free $(\Z_p)^r$-action,
         we may also consider
        the lower bounds estimates of $b_i(X;\F_p)$ for $i\geq 2$. 
        But similarly to $i=1$ case,
        such kind of lower bounds can be nontrivial only under 
       some extra conditions on $X$ (or the orbit space). 
       For example, 
       $(\Z_p)^r$ can act freely on a product of spheres 
       $X=S^{2l_1+1}\times \cdots \times S^{2l_r+1}$ 
         for any positive integer
          $l_1,\cdots, l_r$. But for a given $i\geq 1$,
           $b_i(X;\F_p)=0$ when $l_1,\cdots, l_r$ are all greater than $\frac{i}{2}$.
          So if we put no restrictions on the dimension of $X$, 
          there will be no nontrivial lower bounds of $b_i(X;\F_p)$ in general.
          This also suggests that in the general cases of the Halperin-Carlsson conjecture, 
          it is the sum $\sum_{i\geq 0} b_i(X;\F_p)$ but not any single
          term $b_i(X;\F_p)$ that should have a universal lower bound.
                \\
       \end{rem}

  \section{Appendix-1: The group ring $\F_p[H]$}
  
    For a finite group $H$, the group ring $\F_p[H]$ is a free module over $\F_p$ generated by
         all the elements of $H$. So we can think of the elements
      in $H$ forming a basis of $\F_p[H]$ over $\F_p$, denoted by
      $\{ \delta_h \, | \, h\in H\}$. Then any element $v$
      of $\F_p[H]$ can be written as
       $$v = \sum_{h\in H} l_h \delta_h, \, l_h\in \F_p.$$\n

    In the following, we use $\hat{0}$ to denote the zero element of $\F_p[H]$ to
         distinguish it from the scalar $0\in\F_p$.
       The product $*$ on the group ring $\F_p[H]$ is defined by
        \begin{equation}\label{equ:ast-1}
            \delta_{g} * \delta_h := \delta_{gh} , \   \delta_{g} * \hat{0} =  \hat{0}, 
            \ \ g, h\in H,\ k\in \F_p.
        \end{equation}
        \begin{align}\label{equ:ast-2}
           \sum_{g\in H} k_g \delta_g  *   \sum_{h\in H} l_h \delta_h
           & := \sum_{g\in H} \sum_{h\in H} k_g l_h  (\delta_{g} *
           \delta_h) =  \sum_{g\in H} \sum_{h\in H} k_g l_h
                \delta_{gh}. 
        \end{align}

   The product $*$ is commutative if and only if $H$ itself is commutative.\nn

 Notice that $\delta_e * \delta_h = \delta_h*\delta_e = \delta_h $ for all $h\in H$.
  So $\delta_e$ can be identified
  with the scalar $1\in \F_p$, and the field $\F_p$ can be embedded in
 $\F_p[H]$ via the map
 \begin{align*}
  \iota: \F_p \, &\longrightarrow \, \F_p[H] \\
    k   \, &\longmapsto\, k\delta_e
 \end{align*}

 There also exists a canonical ring homomorphism going the other way, called the \emph{augmentation}.
  It is the map $\eta: \F_p[H] \rightarrow \F_p$, defined by
  \[ \eta \big( \sum_{h\in H} l_h \delta_h \big) = \sum_{h\in H} l_h \in \F_p. \]
  The kernel of $\eta$ is called the \emph{augmentation ideal} of $\F_p[H]$, denoted by
  $\Delta_{\F_p}(H)$. Indeed, $\Delta_{\F_p}(H)$ is a free $\F_p$-module generated by the
 set $\{ -\delta_e + \delta_h  \, ; \, h\in H \}$ and
      $$\F_p[H] = \Delta_{\F_p}(H) \oplus \F_p .$$
       The reader is referred to~\cite{MilisSehg02} and~\cite{Passi79}
     for more information of group rings and their augmentation ideals.
       \nn

       \begin{rem}
   By identifying $1\in \F_p$ with $\delta_e$,
   we will write $ -\delta_e + \delta_h \in  \Delta_{\F_p}(H)$ instead
    of $-1 + \delta_h$ in this paper.\nn
\end{rem}

 There is a natural filtration of $\F_p[H]$ as shown below.
   \begin{equation} \label{Equ:Filtration-GroupRing}
    \F_p[H]   \supset \Delta_{\F_p}(H)  \supset  \Delta^2_{\F_p}(H) \supset\cdots \supset
      \Delta^k_{\F_p}(H) \supset \Delta^{k+1}_{\F_p}(H) \supset \cdots
   \end{equation}
   By abuse of notation, let $ \Delta^0_{\F_p}(H) := \F_p[H]$. In addition, define
    \begin{equation} \label{Equ:lambda-k-H}
     \lambda^k_p(H) := \dim_{\F_p}\Delta^k_{\F_p}(H) - \dim_{\F_p}
     \Delta^{k+1}_{\F_p}(H) =  \dim_{\F_p}\Delta^k_{\F_p}(H)\slash \Delta^{k+1}_{\F_p}(H).
    \end{equation}
    So we have
    \begin{equation} \label{Equ:lambda-k-H-2}
      \dim_{\F_p}\Delta^k_{\F_p}(H) = \sum_{j\geq k}
      \lambda^j_p(H) = |H| - \sum_{0\leq j \leq k-1} \lambda^j_p(H).
    \end{equation}
   Since $H$ is a finite group, the filtration~\eqref{Equ:Filtration-GroupRing} becomes stable
   after finitely many steps. In particular, $\Delta^k_{\F_p}(H)  = \{
   \hat{0}\}$ for some $k$
   if and only if $\Delta_{\F_p}(H)$ is nilpotent.
  A well known fact (\cite[Theorem 9]{Conn63})
  says that for any nontrivial group $G$, the augmentation ideal of the group ring $R[G]$
   over a unital ring $R$ is nilpotent if and only if
  $G$ is a finite $p$-group and $p$ is nilpotent in $R$.
 \\

  \section{Appendix-2: Properties of $|\Omega^m_{p,r}|$ } \label{Appendix 2}
  In the following, we investigate the properties of the family of integers $|\Omega^m_{p,r}|$  defined by~\eqref{Equ:Omega-def}
   and compare them with binomial coefficients.
  These properties of $|\Omega^m_{p,r}|$ are useful for the proof of
   Theorem~\ref{thm:Main1} and Theorem~\ref{thm:Main2}.\nn

   First of all, $|\Omega^0_{p,r}| = 1$, $|\Omega^1_{p,r}| = r$ for any prime $p$, and
   \begin{equation} \label{Equ:Omega-p-1}
      |\Omega^m_{p,1}| = \left\{
                           \begin{array}{ll}
                             1, & \hbox{\text{$0\leq m \leq p-1$};} \\
                             0, & \hbox{\text{otherwise}.}
                           \end{array}
                         \right.
   \end{equation}
     Moreover, we have the following recursive relation
     \begin{equation}\label{Equ:Recur-Relation}
      |\Omega^m_{p,r}| =   |\Omega^m_{p,r-1}| +  |\Omega^{m-1}_{p,r-1}| + \cdots +
        |\Omega^{m-(p-1)}_{p,r-1}|, \ \, \forall\, m\in \Z.
     \end{equation}

    An easy way to prove this recursive relation is to understand $|\Omega^m_{p,r}|$ as the
   number of different ways of putting $m$ indistinguishable balls into $r$
     different bags where each bag can hold at most $p-1$ balls.
     If there are exactly $i$ balls in the first bag, then the remaining
      $m-i$ balls must be put in
     other $r-1$ bags, which gives the term $|\Omega^{m-i}_{p,r-1}|$
     on the right hand side of~\eqref{Equ:Recur-Relation}. \nn
  
   To see how fast $|\Omega^{m}_{p,r}|$ increases with respect to $m$, let us
   compare $|\Omega^{m}_{p,r}|$ with binomial coefficients. Since the formula~\eqref{Equ:Omega-Formula-1}
    of $|\Omega^{m}_{p,r}|$
    is an alternating sum, it is not so convenient for our purpose. So we give another way to compute
    $|\Omega^{m}_{p,r}|$ as follows.
    Let $\Theta^m_{p,r}$ be the set of all partitions of $m$ into $r$ nonnegative integers which are
    no more than $p-1$.
    Each element of $\Theta^m_{p,r}$ can be uniquely represented by an integral vector
    \begin{equation} \label{Equ:alpha}
       \alpha = \big(\,\overset{l_0}{\overbrace{0,\cdots, 0}},\,
       \overset{l_1}{\overbrace{n_1,\cdots, n_1}},\, \overset{l_2}{\overbrace{n_2,\cdots, n_2}},
    \cdots, \overset{l_s}{\overbrace{n_s,\cdots, n_s}}\,\big) \in \Z^r_{\geq 0},
    \end{equation}
    where  $l_0+l_1 +\cdots + l_s = r$, $0 < n_1 < n_2  < \cdots < n_s \leq p-1$
    and $l_1n_1 + \cdots + l_sn_s = m$.
    Note that $l_0$ could be $0$.
    For convenience, define
      \begin{equation} \label{Equ:L-alpha}
          L_{\alpha} :=  l_0 !\, l_1! \, \cdots\, l_s!.
      \end{equation}
   There is a natural map $\varrho: \Omega^{m}_{p,r} \rightarrow
   \Theta^{m}_{p,r}$ which sends $(j_1,\cdots, j_r)\in  \Omega^{m}_{p,r}$ to the
   corresponding partition of $m$ in $ \Theta^{m}_{p,r}$. It is clear that
   $\varrho$ is surjective and we have
   $$|\varrho^{-1}(\alpha)| =  \frac{r!}{L_{\alpha}}, \ \, \alpha\in  \Theta^{m}_{p,r}. $$

     \begin{equation} \label{Equ:Omega-Formula-2}
    \Longrightarrow \  |\Omega^{m}_{p,r}|  =
      \sum_{\alpha \in \Theta^m_{p,r}} \frac{r!}{L_{\alpha}}.
      \qquad\qquad
    \end{equation}

   \nn

    \begin{lem} \label{Lem:Omega-Prop-2}
   For any prime $p$ and any $1\leq m\leq r$,
    \[
   \frac{|\Omega^{m}_{p,r}|}{|\Omega^{m-1}_{p,r}|}
          \geq   \frac{|\Omega^{m}_{2,r}|}{|\Omega^{m-1}_{2,r}|} = \frac{\binom{r}{m}}{\binom{r}{m-1}} =
  \frac{r-m+1}{m}.
          \]
     Moreover, the equality holds only when $p=2$ or $m=1$.
    \end{lem}
 \begin{proof}
   First of all, when $1\leq m \leq r$, any element $\beta\in \Theta^{m-1}_{p,r}$
   must have the form
    \[ \beta = \big(\,\overset{l_0}{\overbrace{0,\cdots, 0}},\,
       \overset{l_1}{\overbrace{n_1,\cdots, n_1}},\, \overset{l_2}{\overbrace{n_2,\cdots, n_2}},
    \cdots, \overset{l_s}{\overbrace{n_s,\cdots, n_s}}\,\big)\ \text{with}\ l_0\geq 1.  \]
    This is because $l_1n_1 + \cdots + l_sn_s = m-1$ implies
     $l_1+ \cdots + l_s \leq m-1$. Then from $l_0+l_1 +\cdots + l_s = r$, we
     get
       \begin{equation} \label{Equ:l_0}
          l_0 \geq r- (m-1) = r-m+1 \geq 1.
       \end{equation}
   This fact allows us to embed $\Theta^{m-1}_{p,r}$ into $\Theta^m_{p,r}$
   by sending $\beta \in \Theta^{m-1}_{p,r}$ to
    $$\beta^+ \in \Theta^m_{p,r}:=
    \big(\,\overset{l_0-1}{\overbrace{0,\cdots, 0}},\, 1,\,
       \overset{l_1}{\overbrace{n_1,\cdots, n_1}},\, \overset{l_2}{\overbrace{n_2,\cdots, n_2}},
    \cdots, \overset{l_s}{\overbrace{n_s,\cdots, n_s}}\,\big).  $$
   This map is obviously injective.
    By the definition~\eqref{Equ:L-alpha}, we have
   $$L_{\beta^+} = \left\{
                    \begin{array}{ll}
                    (l_0-1)!\, (l_1+1)!\, l_2!\,\cdots l_s!, & \hbox{\text{$n_1=1$};} \\
                     (l_0-1)!\, l_1!\, l_2!\,\cdots l_s!, & \hbox{\text{$n_1 > 1$}.}
                    \end{array}
                  \right.
    $$
  \[\Longrightarrow\  L_{\beta^+} = \left\{
                    \begin{array}{ll}
                      \frac{l_1+1}{l_0} L_{\beta}, & \hbox{\text{$n_1=1$};} \\
                   \frac{1}{l_0}  L_{\beta}, & \hbox{\text{$n_1 > 1$}.}
                    \end{array}
                  \right. \ \ \frac{r!}{L_{\beta^+}} = \left\{
                                      \begin{array}{ll}
                                         \frac{l_0}{l_1+1}\cdot \frac{r!}{L_{\beta}},
                          & \hbox{\text{$n_1=1$};} \\
      l_0\cdot \frac{r!}{L_{\beta}},
                           & \hbox{\text{$n_1>1$}.}
                                      \end{array}
                                    \right.
  \]

  \ \\
 Then since $l_0 \geq r- m +1 $ and $0\leq l_1\leq  m-1$, we have
  $l_0 \geq \frac{l_0}{l_1 + 1} \geq \frac{r-m+1}{m}$.
   So
 \[   \frac{r!}{L_{\beta^+}} \geq  \frac{r-m+1}{m}\cdot \frac{r!}{L_{\beta}}, \ \,
  \beta \in \Theta^{m-1}_{p,r}, 1\leq m \leq r. \]
 Then by~\eqref{Equ:Omega-Formula-2}, for any $1\leq m\leq r$, we get
 \begin{align*}
  |\Omega^{m}_{p,r}| =
    \sum_{\alpha \in \Theta^m_{p,r}}  \frac{r!}{L_{\alpha}}
      & \geq  \sum_{\beta \in \Theta^{m-1}_{p,r}}
      \frac{r!}{L_{\beta^+}}  \geq  \sum_{\beta \in \Theta^{m-1}_{p,r}}
      \frac{r-m+1}{m} \cdot  \frac{r!}{L_{\beta}} = \frac{r-m+1}{m}\cdot |\Omega^{m-1}_{p,r}|.
  \end{align*}
  The first ``$\geq$'' is because $\{ \beta^+ \, |\, \beta\in \Theta^{m-1}_{p,r}
  \}$ is a subset of $\Theta^m_{p,r}$.
  When $p=2$, the lemma is trivial.
  When $p >2$, note that for $r\geq m\geq 2$, the partition 
  $(m) \in \Theta^m_{p,r}$ cannot
   be written as $\beta^+$ for any $\beta\in \Theta^{m-1}_{p,r}$,
   which implies $\{ \beta^+ \, |\, \beta\in \Theta^{m-1}_{p,r}
  \}$ is a proper subset of $\Theta^m_{p,r}$. So
   $ |\Omega^{m}_{p,r}| > \frac{r-m+1}{m}\cdot |\Omega^{m-1}_{p,r}|$ for $p>2$ and
    $r\geq m\geq 2$.
  And when $m=1$, we clearly have $ |\Omega^{m}_{p,r}| =
  \frac{r-m+1}{m} \cdot |\Omega^{m-1}_{p,r}|=r$. So the lemma is
  proved.
  \end{proof}

\vskip .6cm

\end{document}